\documentclass[a4paper,11pt]{amsart}

\setlength\parindent{0mm}

\usepackage{mathrsfs}
\usepackage{amssymb}
\usepackage{amscd}
\usepackage[all]{xy}
\usepackage{amsmath}
\usepackage{dsfont}
\usepackage{stmaryrd}
\usepackage{currvita}
\usepackage{hyperref}
\usepackage[latin1]{inputenc}
\usepackage{ae}
\usepackage{amsthm}

\usepackage{xypic}

\usepackage{ifthen}
\xyoption{curve}

\newtheorem{lemma}{Lemma}[section]
\newtheorem{prop}[lemma]{Proposition}

\newtheorem{thm}[lemma]{Theorem}
\newtheorem{cor}[lemma]{Corollary}

\theoremstyle{definition}

\def\phi{\varphi}
\def\theta{\vartheta}
\def\epsilon{\varepsilon}

\def\nin{\notin}

\def\iso{\cong}

\newcommand\set[2][auto]{
     \ifthenelse{\equal{#1}{auto}}{\left\lbrace}{\csname #1\endcsname\lbrace} #2 
     \ifthenelse{\equal{#1}{auto}}{\right\rbrace}{\csname #1\endcsname\rbrace} }

\def\mathbb{\mathds}

\newcommand\isoto{\mbox{$\hspace{7.5pt}\raise 3pt\hbox{$\sim$}\hspace{-17pt}\longrightarrow\hspace{3pt}$}\linebreak[0]}
\newcommand\isoot{\mbox{$\hspace{8.5pt}\raise 3pt\hbox{$\sim$}\hspace{-18pt}\longleftarrow\hspace{3pt}$}\linebreak[0]}

\DeclareMathOperator{\preim}{pre\kern0.13em im} 
\DeclareMathOperator{\preker}{pre\kern0.13em ker} 
\DeclareMathOperator{\precoker}{pre\kern0.13em coker} 

\DeclareMathOperator{\Aut}{Aut}

\begin{document}
\title{Monodromy of subvarieties of PEL-Shimura varieties}
\author{Ralf Kasprowitz} 

\address{Universit\"at Paderborn, Fakult\"at f\"ur Elektrotechnik, Informatik und Mathematik,
Institut f\"ur Mathematik, Warburger Str. 100, 33098 Paderborn, Germany}

\email{kasprowi@math.upb.de}

\begin{abstract} The aim of this paper is to generalize results of C.-L. Chai about the monodromy of Hecke invariant subvarieties in \cite{chaimonodromy} to Shimura varieties of PEL-type.   \end{abstract}

\maketitle



\section{Introduction}

This paper deals with a generalization of the following results of C.-L. Chai about Hecke invariant subvarieties. Let $\mathcal{A}_{g,N}$ denote the moduli space over $\overline{\mathbb{F}}_p$ of $g$-dimensional principally polarized abelian varieties in characteristic $p$ with symplectic level-$N$ structure. Let $l \neq p$ be a prime number and let $N >3$ be a natural number relatively prime to $pl$. Let $Z$ be a smooth locally closed subvariety  of $\mathcal{A}_{g,N}$. Assume that $Z$ is stable under all $l$-adic Hecke correspondences coming from $\mathbf{Sp}_{2g}(\mathbb{Q}_l)$ and that the $l$-adic Hecke correspondences operate transitively on the set of connected components of $Z$. Furthermore, let $Z^0$ be a connected component of $Z$ with generic point $z$ and denote by $A \rightarrow Z^0$ the universal abelian scheme restricted to $Z^0$. Then there are the following Propositions:

\begin{prop}{\cite[Proposition 4.1]{chaimonodromy}}
Assume that the $l$-adic monodromy representation $\rho_l:\pi_1(Z^0,\overline{z}) \rightarrow \mathbf{Sp}_{2g}(\mathbb{Q}_l)$ attached to $A \rightarrow Z^0$ has infinite image. Then its image contains an open subgroup of $\mathbf{Sp}_{2g}(\mathbb{Q}_l)$.
\end{prop}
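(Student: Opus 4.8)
\emph{Proof proposal.} The plan is to show that $M:=\rho_l\big(\pi_1(Z^0,\overline z)\big)$ is open in $\mathbf{Sp}_{2g}(\mathbb{Q}_l)$, which is stronger than the assertion. Since $\pi_1(Z^0,\overline z)$ is profinite and $\rho_l$ is continuous, $M$ is compact; because the polarization is principal, $T_lA_{\overline z}$ with its Weil pairing is a self\-dual symplectic $\mathbb{Z}_l$-lattice, so $M$ lies in $\mathbf{Sp}_{2g}(\mathbb{Z}_l)$, is closed in $\mathbf{Sp}_{2g}(\mathbb{Q}_l)$, and is a compact $l$-adic Lie group. Let $\frakg:=\operatorname{Lie}(M)$; then ``$M$ infinite'' is equivalent to $\frakg\neq 0$. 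As $Z^0$ is smooth over $\overline{\mathbb{F}}_p$ and $R^1$ of the universal abelian scheme is a pure lisse sheaf, it is semisimple as a $\pi_1(Z^0,\overline z)$-representation by Weil~II; hence the Zariski closure $H$ of $M$ in $\mathbf{Sp}_{2g}$ is reductive, and a compact Zariski\-dense subgroup of the $\mathbb{Q}_l$-points of a reductive group is open, so $M$ is open in $H(\mathbb{Q}_l)$ and $\operatorname{Lie}(H)=\frakg$. It therefore suffices to prove that $\frakg$ is an ideal of $\mathfrak{sp}_{2g}(\mathbb{Q}_l)$: since $\mathfrak{sp}_{2g}$ is simple and $\frakg\neq 0$ this forces $\frakg=\mathfrak{sp}_{2g}(\mathbb{Q}_l)$, and a closed subgroup of $\mathbf{Sp}_{2g}(\mathbb{Q}_l)$ with full Lie algebra is open.

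To exploit Hecke\-invariance I would pass to the tower $\widetilde{\mathcal A}:=\varprojlim_n\mathcal A_{g,Nl^n}$, a pro\-\'etale $\mathbf{Sp}_{2g}(\mathbb{Z}_l)$-cover $\pi\colon\widetilde{\mathcal A}\to\mathcal A_{g,N}$ carrying the full action of $\mathbf{Sp}_{2g}(\mathbb{Q}_l)$ by $l$-adic Hecke correspondences; stability of $Z$ under these correspondences means that $\widetilde Z:=\pi^{-1}(Z)$ is $\mathbf{Sp}_{2g}(\mathbb{Q}_l)$-stable. Fix a connected component $C_0$ of $\widetilde Z$ lying over $Z^0$ and set $H':=\operatorname{Stab}_{\mathbf{Sp}_{2g}(\mathbb{Q}_l)}(C_0)$. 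The components of $\widetilde Z$ over $Z^0$ form the coset space $M\backslash\mathbf{Sp}_{2g}(\mathbb{Z}_l)$ with its right translation action, whence $H'\cap\mathbf{Sp}_{2g}(\mathbb{Z}_l)=M$; since $\mathbf{Sp}_{2g}(\mathbb{Z}_l)$ is open in $\mathbf{Sp}_{2g}(\mathbb{Q}_l)$ this gives $\operatorname{Lie}(H')=\frakg$.

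Now for $h\in\mathbf{Sp}_{2g}(\mathbb{Q}_l)$ the component $h\cdot C_0$ lies over some connected component $Z'$ of $Z$; its stabilizer is $hH'h^{-1}$, and $hH'h^{-1}\cap\mathbf{Sp}_{2g}(\mathbb{Z}_l)$ is a monodromy group of $Z'$, whose $\mathbf{Sp}_{2g}(\mathbb{Z}_l)$-conjugacy class depends only on $Z'$ (the components of $\widetilde Z$ over $Z'$ are permuted transitively by $\mathbf{Sp}_{2g}(\mathbb{Z}_l)$, which conjugates their stabilizers). As $Z$ has only finitely many connected components, it follows that $\operatorname{Ad}(h)\frakg=\operatorname{Lie}(hH'h^{-1})$ meets only finitely many $\mathbf{Sp}_{2g}(\mathbb{Z}_l)$-conjugacy classes as $h$ varies; equivalently, the $\mathbf{Sp}_{2g}(\mathbb{Q}_l)$-orbit of $[\frakg]$ in the Grassmannian of $\mathfrak{sp}_{2g}$ is a finite union of $\mathbf{Sp}_{2g}(\mathbb{Z}_l)$-orbits, hence \emph{compact}. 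This says that the algebraic normalizer $\mathcal N:=N_{\mathbf{Sp}_{2g}}(\frakg)$ has cocompact $\mathbb{Q}_l$-points, which for $\mathbf{Sp}_{2g}$ forces $\mathcal N$ to contain a parabolic subgroup and hence to \emph{be} a parabolic subgroup $Q$. But $H\subseteq\mathcal N=Q$ is reductive, so after conjugating $\frakg$ by an element of $R_u(Q)(\mathbb{Q}_l)$ (which leaves $\mathcal N$ unchanged) we may assume $\frakg\subseteq\operatorname{Lie}(L)$ for a Levi $L$ of $Q$; then $Q$-stability of $\frakg$ forces $\frakg$ to centralize $\operatorname{Lie}(R_u(Q))$, and since the Dynkin diagram of type $C_g$ is connected the nilradical of a proper parabolic is a faithful $L$-module, so $\frakg=0$ unless $Q=\mathbf{Sp}_{2g}$. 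As $\frakg\neq0$ we get $\mathcal N=\mathbf{Sp}_{2g}$, i.e. $\frakg$ is an ideal, and we conclude as above.

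The main obstacle is the middle step: setting up the $\mathbf{Sp}_{2g}(\mathbb{Q}_l)$-action on the tower cleanly, verifying $H'\cap\mathbf{Sp}_{2g}(\mathbb{Z}_l)=M$, and—the genuinely new point—recognizing that Hecke\-invariance together with $\#\pi_0(Z)<\infty$ makes the orbit of $[\frakg]$ compact rather than merely discrete. Everything afterwards is structure theory of $\mathbf{Sp}_{2g}$; the PEL generalization should follow the identical scheme, the group\-theoretic inputs ``a cocompact algebraic subgroup contains a parabolic'' and ``a nonzero reductive subalgebra normalized by a proper parabolic is impossible'' being exactly what must be re\-checked for the reductive group attached to the PEL datum.
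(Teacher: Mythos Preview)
Your approach is essentially the one in the paper (which in turn follows Chai): pass to the pro-\'etale $l$-adic tower, identify $M$ with the stabiliser in $\mathbf{Sp}_{2g}(\mathbb Z_l)$ of a connected component, use Hecke-invariance together with transitivity on $\pi_0(Z)$ to deduce that the algebraic normaliser $\mathcal N$ of the monodromy has cocompact $\mathbb Q_l$-points, and finish with Borel--Tits. Your compactness step via ``finitely many $\mathbf{Sp}_{2g}(\mathbb Z_l)$-orbits in the Grassmannian'' is a mild rephrasing of the paper's observation that $\Pi_0(\widetilde Z)\cong \mathbf{Sp}_{2g}(\mathbb Q_l)/H'$ is profinite, hence compact.

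The one place your write-up wobbles is the end-game. The sentence ``after conjugating $\mathfrak g$ by an element of $R_u(Q)(\mathbb Q_l)$ (which leaves $\mathcal N$ unchanged) we may assume $\mathfrak g\subseteq\operatorname{Lie}(L)$'' is self-defeating: conjugation by $R_u(Q)\subset\mathcal N$ \emph{fixes} $\mathfrak g$, so it cannot move $\mathfrak g$ into $\operatorname{Lie}(L)$. What you presumably intend is to conjugate the algebraic group $H$ into $L$ and use $\operatorname{Lie}H=\mathfrak g$; but the preliminary claim ``a compact Zariski-dense subgroup of the $\mathbb Q_l$-points of a reductive group is open'' is false when the group has positive-dimensional centre (think of the graph of a non-algebraic continuous endomorphism of $\mathbb Z_l^\times$ inside $\mathbb G_m^2$), so $\operatorname{Lie}H=\mathfrak g$ is not yet justified. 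The paper closes this gap by invoking the stronger input from Chai: the connected Zariski closure $\mathbf M$ is \emph{semisimple} (purity in Weil~II forces the connected centre of $\mathbf M$ to act by finite-order characters, hence trivially), so on the one hand $\operatorname{Lie}\mathbf M=\mathfrak g$ by \cite[Cor.~7.9]{borellinear}, and on the other $\mathbf N:=N_{\mathbf{Sp}_{2g}}(\mathbf M)^0$ is itself reductive \cite[Lemma~3.3]{chaimonodromy}. Then $\mathbf N$ is simultaneously reductive and (by Borel--Tits, since it is cocompact) parabolic, hence $\mathbf N=\mathbf{Sp}_{2g}$; this replaces your Levi/centraliser argument and finishes the proof cleanly.
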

 
One can show that the conditions of this Proposition imply that $Z$ is connected. This can be applied to subvarieties which are not contained in the supersingular locus of $\mathcal{A}_{g,N}$.

\begin{prop}{\cite[Proposition 4.4]{chaimonodromy}}
Assume that $A_{\overline{z}}$ is not supersingular. Then $Z = Z^0$.
\end{prop}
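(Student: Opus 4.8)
Since $Z^0$ is a connected component of $Z$, proving $Z=Z^0$ means proving that $Z$ is connected, i.e. that $\pi_0(Z)$ — a finite set, as $Z$ is of finite type — is a point. Write $\rho_l\colon\pi_1(Z^0,\overline{z})\to\mathbf{Sp}_{2g}(\mathbb{Z}_l)$ for the monodromy of $A\to Z^0$ on the $l$-adic Tate module and $\Gamma$ for the closure of its image. The plan is: first show $\Gamma$ is open in $\mathbf{Sp}_{2g}(\mathbb{Z}_l)$; then translate the configuration of the components of $Z$ into the $l$-adic level tower and finish by a group-theoretic rigidity of $\mathbf{Sp}_{2g}(\mathbb{Q}_l)$. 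For the first step I would argue: if $\rho_l$ had finite image, then the $l$-adic Tate module of $A$ would become a constant local system on a finite \'etale cover of $Z^0$, and, invoking the geometry of Oort leaves together with the $l$-adic Hecke-invariance of $Z$, one deduces that $Z$ must then lie in the supersingular locus of $\mathcal{A}_{g,N}$ — contradicting the hypothesis that $A_{\overline{z}}$ is not supersingular. Hence $\rho_l$ has infinite image, so Proposition 4.1 applies and its image, being contained in $\mathbf{Sp}_{2g}(\mathbb{Z}_l)$, is an open subgroup of $\mathbf{Sp}_{2g}(\mathbb{Z}_l)$. Moreover, by the transitivity hypothesis every connected component of $Z$ is joined to $Z^0$ by a chain of $l$-adic Hecke correspondences — along which the universal abelian scheme changes by $l$-power isogenies, so that the attached $l$-adic representations become isomorphic over the intermediate correspondence varieties — whence the monodromy on each connected component of $Z$ is likewise open in $\mathbf{Sp}_{2g}(\mathbb{Z}_l)$.

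Next I would pass to the $l$-adic tower. Put $Z_\infty:=\projlim_n\bigl(Z\times_{\mathcal{A}_{g,N}}\mathcal{A}_{g,Nl^n}\bigr)$, the pro-(finite \'etale) cover of $Z$ obtained by adjoining full symplectic level-$l^n$ structures; it is an $\mathbf{Sp}_{2g}(\mathbb{Z}_l)$-torsor over $Z$ with $Z_\infty/\mathbf{Sp}_{2g}(\mathbb{Z}_l)=Z$, and the $l$-adic Hecke-invariance of $Z$ furnishes a genuine left action of $\mathbf{Sp}_{2g}(\mathbb{Q}_l)$ on $Z_\infty$ extending the deck action of $\mathbf{Sp}_{2g}(\mathbb{Z}_l)$: an element $g$ sends a point of $Z_\infty$ — a symplectic trivialization of $T_lA$ — to the point obtained by transport of structure along the $l$-power isogeny cut out by the lattice $g^{-1}\mathbb{Z}_l^{2g}\subseteq V_lA$, which again lies in $Z$ by hypothesis. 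On connected components this says: the fibre of $\pi_0(Z_\infty)\to\pi_0(Z)$ over a component $Z^i$ is (a set with transitive $\mathbf{Sp}_{2g}(\mathbb{Z}_l)$-action isomorphic to) the coset space $\mathbf{Sp}_{2g}(\mathbb{Z}_l)/\Gamma_i$, with $\Gamma_i$ the monodromy there — a finite set, since $\Gamma_i$ is open by the previous step — so $\pi_0(Z_\infty)$ is a finite set; moreover $\pi_0(Z)=\mathbf{Sp}_{2g}(\mathbb{Z}_l)\backslash\pi_0(Z_\infty)$, and the action of the $l$-adic Hecke correspondences on $\pi_0(Z)$ is the one induced by the $\mathbf{Sp}_{2g}(\mathbb{Q}_l)$-action on $\pi_0(Z_\infty)$.

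The action of $\mathbf{Sp}_{2g}(\mathbb{Q}_l)$ on the finite set $\pi_0(Z_\infty)$ factors through a finite quotient; but $\mathbf{Sp}_{2g}(\mathbb{Q}_l)$ is perfect (being generated by its unipotent one-parameter subgroups) and simple modulo its centre $\{\pm1\}$, so it admits no nontrivial finite quotient. Hence $\mathbf{Sp}_{2g}(\mathbb{Q}_l)$ — and in particular $\mathbf{Sp}_{2g}(\mathbb{Z}_l)$ — acts trivially on $\pi_0(Z_\infty)$; therefore $\pi_0(Z_\infty)=\pi_0(Z)$, every $l$-adic Hecke correspondence carries each connected component of $Z$ into itself, and then the transitivity hypothesis forces $\pi_0(Z)$ to be a single point, i.e. $Z=Z^0$. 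I expect the only genuine obstacle to be the very first step, the exclusion of finite monodromy: this is where the non-supersingularity of $A_{\overline{z}}$ is used essentially, and it relies on the (non-elementary) input that leaves in non-supersingular Newton strata are irreducible, that $l$-adic Hecke orbits are Zariski dense in them, and that they carry infinite $l$-adic monodromy — or, if such a statement is available earlier, on the direct assertion that $l$-adic Hecke-invariance together with non-supersingularity of the generic point forces $\rho_l$ to have infinite image. Everything after that point is formal: Proposition 4.1, the torsor structure of the level tower, and the absence of nontrivial finite quotients of $\mathbf{Sp}_{2g}(\mathbb{Q}_l)$.
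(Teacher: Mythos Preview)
Your overall architecture is correct and, from the point where infinite monodromy is established onward, coincides with the paper: apply Proposition~4.1 to get that the image is open, pass to the pro-\'etale $l$-adic level tower, identify $\pi_0$ of the tower with a coset space, and kill it using that $\mathbf{Sp}_{2g}(\mathbb{Q}_l)$ has no nontrivial finite quotients. This is exactly what happens in the proof of Proposition~\ref{monodromy} and Corollary~\ref{connected}.

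The genuine gap is your first step, and you already flag it. Your proposed mechanism --- ``geometry of Oort leaves'', ``leaves in non-supersingular Newton strata are irreducible, $l$-adic Hecke orbits are Zariski dense in them, and they carry infinite $l$-adic monodromy'' --- is not only vague but in danger of being circular: the irreducibility of non-supersingular leaves and Newton strata is precisely one of the applications \emph{deduced from} this proposition (see \cite[Theorem~3.1]{chaioort}), so you cannot feed it back in as input. The paper's argument for excluding finite monodromy is quite different and more concrete. Suppose $\rho_l$ has finite image. After a finite \'etale cover $Z'\to Z^0$ the monodromy is trivial, and then Oort's theorem \cite[Theorem~2.1]{oortmoduli} shows the pulled-back family is isogenous to a constant abelian scheme $A_0\times Z'$ with $A_0$ defined over $\overline{\mathbb{F}}_p$; in particular the Newton polygon is constant along $\overline{Z^0}$. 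Next one shows that $\overline{Z^0}\subset\mathcal{A}_{g,N}$ is proper over $\overline{\mathbb{F}}_p$: take the closure in a toroidal compactification, extend the isogeny to the semiabelian degeneration using \cite[Prop.~I.2.7]{faltingschai}, and observe that a quotient of an abelian scheme is abelian, so the closure stays inside the open part. Finally, the Ekedahl--Oort stratification is used: the proper subscheme $\overline{Z^0}$ meets some EO stratum, and if this stratum is not the minimal one, the closure of a prime-to-$p$ Hecke orbit of a point there (which is positive-dimensional by the non-supersingular analogue of Lemma~\ref{heckeorbit}) is proper and hence meets a strictly smaller EO stratum; iterating, $\overline{Z^0}$ meets the minimal EO stratum, which lies in the supersingular locus. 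This contradicts the constancy of the Newton polygon. None of this uses irreducibility of leaves.
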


One can use these results for example to show that non-supersingular Newton strata are irreducible (\cite[Theorem 3.1]{chaioort}).
In this paper, we generalize the first result as described in the following Proposition. We denote by $Sh_{K^p_0}$ the geometric special fiber of a moduli space $\mathcal{S}\mathfrak{h}_{K^p_0}$ corresponding to an integral Shimura PEL-datum constructed by R. E. Kottwitz in \cite{kottwitz}. The associated reductive group over $\mathbb{Q}$ is denoted by $\mathbf{G}$, with derived group $\mathbf{G}_1$. Let $P$ be the finite set of primes of $\mathbb{Q}$ containing $p$ and the primes $l$ such that some simple component of $\mathbf{G}_1$ is $\mathbb{Q}_l$-anisotropic.

\begin{prop} 
Let $\mathcal{D}$ be an integral Shimura PEL-datum, unramified at a prime $p$. Let $\mathbf{H}_1, \dots,\mathbf{H}_n$ be the simple components of the derived group $\mathbf{G}_1$. Let $Z \subseteq Sh_{K^p_0}$ be a smooth, locally closed subscheme. \begin{enumerate} \item Suppose that for a prime $l \notin P$ the $l$-Hecke correspondences of the simply connected covering of $\mathbf{G}_1$ act transitively on the set of connected components of $Z$. If for all $i=1,\dots,n$ the set $\mbox{im}(\rho_l) \cap \mathbf{H}_i(\mathbb{Q}_l)$ is not finite, then $\mbox{im}(\rho_l) = K_{0,l}$. \item Suppose that the prime-to-$P$ Hecke correspondences of the simply connected covering of $\mathbf{G}_1$ act transitively on the set of connected components of $Z$. If for all $i=1,\dots,n$ and all $l \notin P$ the set $\mbox{im}(\rho_l) \cap \mathbf{H}_i(\mathbb{Q}_l)$ is not finite, then $\mbox{im}(\rho^P) = K^P_0$. \end{enumerate}
\end{prop}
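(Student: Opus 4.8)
\emph{Proof sketch.}
The plan is to adapt C.-L.\ Chai's proof of \cite[Proposition 4.1]{chaimonodromy} to the group $\mathbf{G}_1$, which here is neither simply connected nor $\mathbb{Q}$-simple. Fix a connected component $Z^0$ of $Z$ with generic point $z$ and a geometric point $\bar z$ above $z$, and write $U := \mathrm{im}(\rho_l)$ and $\mathfrak{g}_{1,l} := \mathrm{Lie}(\mathbf{G}_1)\otimes_{\mathbb{Q}}\mathbb{Q}_l$. Since $\rho_l$ is the monodromy on the $l$-adic Tate module of the universal abelian scheme, which preserves a $\mathbb{Z}_l$-lattice, $U$ is compact, hence a closed, and therefore $l$-adic Lie, subgroup of $K_{0,l}$; put $\mathfrak{u} := \mathrm{Lie}(U) \subseteq \mathfrak{g}_{1,l}$. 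I will prove (1) in three steps: (a) $\mathfrak{u}$ is an ideal of $\mathfrak{g}_{1,l}$; (b) the non-finiteness hypotheses force $\mathfrak{u} = \mathfrak{g}_{1,l}$, hence $U$ open in $K_{0,l}$; (c) transitivity of the Hecke action improves ``$U$ open'' to ``$U = K_{0,l}$''. Part (2) then follows by running (a)--(c) with the full prime-to-$P$ tower in place of the $l$-adic one.

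\emph{Step (a).} For $g$ in the image in $\mathbf{G}_1(\mathbb{Q}_l)$ of $\mathbf{G}_1^{sc}(\mathbb{Q}_l)$, Hecke-invariance of $Z$ furnishes a correspondence $Z \xleftarrow{p_1} Z_g \xrightarrow{p_2} Z$ with $p_1, p_2$ finite \'etale (recall $l\neq p$), together with a quasi-isogeny $p_1^*A \to p_2^*A$ which, in the chosen level-$K_{0,l}$ trivialisations, acts on $l$-adic Tate modules by $g$. Using the transitivity hypothesis one may choose the connected component $Z_g^0$ so that $p_1(Z_g^0)$ and $p_2(Z_g^0)$ both lie in $Z^0$; comparing monodromy representations on $\pi_1(Z_g^0,\bar z)$ gives $\rho_l\circ(p_1)_* = g\,(\rho_l\circ(p_2)_*)\,g^{-1}$. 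Since $(p_1)_*\pi_1(Z_g^0)$ and $(p_2)_*\pi_1(Z_g^0)$ are open in $\pi_1(Z^0,\bar z)$, both have monodromy Lie algebra $\mathfrak{u}$, whence $\mathrm{Ad}(g)\,\mathfrak{u} = \mathfrak{u}$. As the $g$ arising this way are Zariski-dense in $\mathbf{G}_1$, the subspace $\mathfrak{u}$ is $\mathrm{Ad}(\mathbf{G}_1)$-stable, i.e.\ an ideal of $\mathfrak{g}_{1,l}$.

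\emph{Step (b).} Being a $\mathbb{Q}_l$-rational $\mathrm{Ad}(\mathbf{G}_1)$-stable ideal of $\mathfrak{g}_{1,l} = \bigoplus_i \mathrm{Lie}(\mathbf{H}_i)\otimes\mathbb{Q}_l$, the Lie algebra $\mathfrak{u}$ is a direct sum of minimal such ideals; writing $\mathbf{H}_i = \mathrm{Res}_{F_i/\mathbb{Q}}\mathbf{H}_i'$ with $\mathbf{H}_i'$ absolutely simple, these are the $\mathrm{Lie}(\mathbf{H}_i'{}_{F_{i,v}})$ for $v\mid l$, since $\mathrm{Gal}(\overline{\mathbb{Q}}_l/\mathbb{Q}_l)$ permutes the $\overline{\mathbb{Q}}_l$-simple summands inside one such factor transitively and $\mathbb{Q}_l$-rationality therefore forbids splitting a single local factor. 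The hypothesis that $U\cap\mathbf{H}_i(\mathbb{Q}_l)$ is infinite gives $\mathfrak{u}\cap(\mathrm{Lie}(\mathbf{H}_i)\otimes\mathbb{Q}_l)\neq 0$ for every $i$; one must then upgrade this, for each $i$, to the statement that \emph{all} local factors of $\mathbf{H}_i$ occur in $\mathfrak{u}$, and so conclude $\mathfrak{u} = \mathfrak{g}_{1,l}$ and $U$ open in $K_{0,l}$. This is the step demanding the most care, precisely because $\mathbf{G}_1$ is only almost $\mathbb{Q}$-simple in each factor (and not split), in contrast with Chai's situation; I expect it to be handled either by a $\mathbb{Q}$-rationality constraint on the algebraic monodromy group of the family over $Z^0$ (so that $\mathfrak{u}$ descends to a $\mathbb{Q}$-rational ideal of $\mathrm{Lie}(\mathbf{G}_1)$, whose only such ideals are the sums of the $\mathrm{Lie}(\mathbf{H}_i)$) or by applying the transitivity input factor-by-factor.

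\emph{Step (c) and part (2).} With $U$ open, the pro-\'etale $l$-adic tower $\widetilde{Z}\to Z$ restricted over $Z^0$ has finite fibres, identified with $U\backslash K_{0,l}$; since the Shimura scheme is of finite type, $\pi_0(Z)$ and hence $\pi_0(\widetilde{Z})$ are finite, and $\mathbf{G}_1^{sc}(\mathbb{Q}_l)$ acts on the latter. The level group $K_{0,l}$ acts transitively on each fibre $U\backslash K_{0,l}$ by right translation, and by hypothesis the Hecke action is transitive on $\pi_0(Z)$; combining these, $\mathbf{G}_1^{sc}(\mathbb{Q}_l)$ acts transitively on $\pi_0(\widetilde{Z})$. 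But $l\notin P$ means $\mathbf{G}_1\otimes\mathbb{Q}_l$ has no anisotropic almost-simple factor, so by the Kneser--Tits property $\mathbf{G}_1^{sc}(\mathbb{Q}_l)$ is generated by its unipotent elements and hence has no proper subgroup of finite index; a transitive action of such a group on a finite set is trivial, so $\pi_0(\widetilde{Z})$ is a single point. In particular $U\backslash K_{0,l}$ is a point, i.e.\ $U = K_{0,l}$ (and, as a byproduct, $Z$ is connected). For (2) one repeats this verbatim, applying Step~(a)--(b) at every $l\notin P$ and replacing $\widetilde{Z}$ and $\mathbf{G}_1^{sc}(\mathbb{Q}_l)$ by the full prime-to-$P$ tower and $\mathbf{G}_1^{sc}(\mathbb{A}_f^P)$; the Kneser--Tits argument applied at each local factor again forces $\pi_0$ of that tower to be a point, i.e.\ $\mathrm{im}(\rho^P) = K_0^P$. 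The main obstacle, as indicated, is Step~(b): Steps~(a) and (c) are Chai's, once the Hecke formalism for a general $\mathbf{G}$ and the condition $l\notin P$ (which is exactly what makes Kneser--Tits available) are in place, whereas Step~(b) is where the lack of $\mathbb{Q}$-simplicity in the general PEL setting makes the argument genuinely more delicate.
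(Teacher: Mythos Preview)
Your overall architecture---show $U=\mathrm{im}(\rho_l)$ is open in $K_{0,l}$, then use Kneser--Tits on $\mathbf{G}_1^{sc}(\mathbb{Q}_l)$ acting on a finite $\pi_0$ to upgrade to equality---is the same as the paper's, and your Step~(c) and the reduction of (2) to (1) match the paper essentially verbatim.

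The substantive divergence, and the place where your sketch breaks, is Step~(a). Your claim that ``using the transitivity hypothesis one may choose the connected component $Z_g^0$ so that $p_1(Z_g^0)$ and $p_2(Z_g^0)$ both lie in $Z^0$'' is not justified: transitivity on $\pi_0(Z)$ does \emph{not} say that every individual $g$ sends $Z^0$ back to $Z^0$---indeed, if it did, transitivity would force $Z$ connected, which is what you are trying to prove. What your correspondence argument actually yields is $\mathrm{Ad}(g)\mathfrak u=\mathfrak u$ only for those $g$ stabilising the chosen component $\widetilde{Z^0}{}^0$ of the full $l$-adic tower $\widetilde Z\to Z$, i.e.\ $\mathrm{Stab}_{\mathbf{G}_1(\mathbb{Q}_l)}(\widetilde{Z^0}{}^0)\subset N_{\mathbf{G}_1}(\mathbf M)(\mathbb{Q}_l)$ where $\mathbf M=\overline U{}^0$. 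This is exactly the inclusion the paper establishes, and from here the paper's route is \emph{different} from yours: rather than trying to argue Zariski density of the stabiliser, the paper observes that transitivity on $\pi_0(Z)$ gives a homeomorphism $\mathbf{G}_1(\mathbb{Q}_l)/\mathrm{Stab}\cong\Pi_0(\widetilde Z)$ with a profinite set, so $\mathbf{G}_1(\mathbb{Q}_l)/\mathbf N(\mathbb{Q}_l)$ is compact for $\mathbf N:=N_{\mathbf{G}_1}(\mathbf M)^0$. Borel--Tits then forces $\mathbf N$ to contain a maximal $\mathbb{Q}_l$-split solvable subgroup, namely the unipotent radical $A$ of a minimal parabolic; since $\mathbf N$ is reductive, it contains the smallest normal subgroup of $\mathbf{G}_{1,\mathbb{Q}_l}$ containing $A$.

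This Borel--Tits argument is also what dissolves your Step~(b) worry. Because $l\notin P$, every $\mathbb{Q}_l$-simple factor of $\mathbf{G}_{1,\mathbb{Q}_l}$ is isotropic, so $A$ meets each of them nontrivially, and the normal subgroup generated by $A$ is all of $\mathbf{G}_{1,\mathbb{Q}_l}$; hence $\mathbf N=\mathbf{G}_{1,\mathbb{Q}_l}$ and $\mathbf M$ is normal. The non-finiteness hypothesis on each $\mathbf H_i$ is then only used at the very end, to rule out $\mathbf M$ missing some $\mathbb{Q}$-simple block. So the ``$\mathbb{Q}$-rationality of the algebraic monodromy'' fix you contemplate is not needed: the isotropy assumption $l\notin P$, fed through Borel--Tits, already handles the local decomposition. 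In short, replace your direct Ad-invariance argument in Step~(a) by the compactness-of-the-quotient/Borel--Tits argument, and Step~(b) comes along for free.
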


Finally, we use recent results of E. Viehmann and T. Wedhorn about the Ekedahl-Oort stratification of PEL Shimura varieties to generalize the second Proposition. Here, $\mathcal{N}_{b_0}$ denotes the basic Newton stratum of $Sh_{K^p_0}$.

\begin{thm} Let $\mathcal{D}$ be an integral Shimura PEL-datum of type $A$ or $C$, unramified at a prime $p$.
Suppose that the prime-to-$P$ Hecke correspondences of $\mathbf{G}_1$ act transitively on the set of connected components of $Z$. If $z \nin \mathcal{N}_{b_0}$, then $Z$ is connected.  
\end{thm}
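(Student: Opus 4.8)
The plan is to deduce the theorem from the generalized monodromy Proposition (the one with $\mathrm{im}(\rho^P) = K^P_0$) in exactly the way Chai deduces his Proposition~4.4 from Proposition~4.1, with the PEL-type-$A$-or-$C$ and Viehmann--Wedhorn input replacing the single-abelian-variety argument. Suppose for contradiction that $Z$ is not connected. Since the prime-to-$P$ Hecke correspondences of $\mathbf{G}_1$ act transitively on $\pi_0(Z)$, it suffices to produce a single connected component $Z^0$ (say the one containing the chosen point, or rather the component whose generic point we may take to be $z$ after passing to the connected component) for which the prime-to-$P$ monodromy image is already all of $K^P_0$; for then the monodromy group cannot permute the (more than one) components nontrivially, contradicting transitivity — more precisely, the covering of $Z$ trivializing the $\pi_0$-action would have to be connected, forcing $\#\pi_0(Z)=1$. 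So the real content is: \emph{the hypotheses guarantee that condition (2) of the Proposition applies}, i.e.\ that for every simple factor $\mathbf{H}_i$ and every $l\notin P$, the intersection $\mathrm{im}(\rho_l)\cap\mathbf{H}_i(\mathbb{Q}_l)$ is infinite.

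The key step, therefore, is to show that $z\notin\mathcal{N}_{b_0}$ (the point is not basic, equivalently not ``supersingular'' in the PEL sense) forces the $l$-adic monodromy to be infinite on each simple factor. This is where the Ekedahl--Oort stratification results of Viehmann and Wedhorn enter, and this is the step I expect to be the main obstacle. The idea is: if $z$ lies outside the basic Newton stratum, then the $p$-divisible group (with PEL structure) attached to $z$ is not isoclinic/basic, so its associated Dieudonné module has a nontrivial slope decomposition — or, more robustly for the EO-stratum argument, the local structure at $z$ is ``large'' enough that the formal deformation space, and hence the image of $\pi_1$, cannot be contained in a proper subgroup of bounded type. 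Concretely, I would argue that the closure of the EO stratum (or the Newton stratum) through $z$ inside a single component $Z^0$ carries, after suitable Hecke translates, enough independent $\ell$-adic Tate-module variation that the Zariski closure of $\mathrm{im}(\rho_l)$ projects onto a noncompact (hence, by Chai's Proposition~4.1-type rigidity, open) subgroup of each $\mathbf{H}_i(\mathbb{Q}_l)$. The type $A$ or $C$ restriction is precisely what makes the group-theoretic classification of the relevant $p$-adic and $\ell$-adic groups clean enough for Viehmann--Wedhorn's description of EO strata to be available and for ``non-basic $\Rightarrow$ infinite monodromy on each factor'' to go through; type $D$ is excluded because of the well-known subtleties with the special orthogonal/spin groups and the failure of connectedness statements there.

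Once infinitude of $\mathrm{im}(\rho_l)\cap\mathbf{H}_i(\mathbb{Q}_l)$ is established for all $i$ and all $l\notin P$ — using the transitivity of the prime-to-$P$ Hecke action to move $z$ around $Z$ so that the hypothesis is checked on a single component and then propagated — part (2) of the Proposition yields $\mathrm{im}(\rho^P) = K^P_0$. It then remains to translate ``full prime-to-$P$ monodromy on each component'' into ``$Z$ is connected.'' For this I would use that $K^P_0$ is, up to the fixed finite-index issues, the full prime-to-$P$ adelic level group, so that the étale cover of $Z^0$ it classifies is geometrically connected and already realizes every Hecke translate; the Hecke orbit of $Z^0$ therefore cannot split $Z$ into more than one component. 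Equivalently: the monodromy group acting transitively on the fibers of the $\pi_0$-cover, combined with the Hecke action being transitive on $\pi_0(Z)$ and factoring through the same group, forces $\#\pi_0(Z)=1$. The bookkeeping here is routine given Chai's template; the only genuinely new work is the second paragraph's non-basic-implies-infinite-monodromy input via Viehmann--Wedhorn, and care must be taken that their hypotheses (PEL type $A$ or $C$, unramified at $p$) match ours exactly, which they do by assumption.
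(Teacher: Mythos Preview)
Your overall framework is right: the theorem reduces to verifying the hypotheses of the monodromy Proposition (so that Corollary~\ref{connected} gives connectedness), and the substantive content is the implication ``$z \notin \mathcal{N}_{b_0}$ $\Rightarrow$ $\mathrm{im}(\rho_l)$ infinite for every $l$.'' However, your argument for this implication is a genuine gap. You gesture at ``nontrivial slope decomposition,'' ``formal deformation space,'' and ``enough independent $\ell$-adic Tate-module variation,'' but none of this is an argument, and it is not how the paper proceeds. There is no direct route from non-basic to infinite $\ell$-adic monodromy via EO strata in the way you describe.

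The paper instead argues by contraposition, and the ingredients are quite different from what you list. Suppose $\mathrm{im}(\rho_l)$ is finite for some $l$. After a finite base change $Z' \to Z^0$ the monodromy becomes trivial, and then Oort's theorem (\cite{oortmoduli}) forces the generic abelian variety to be isogenous to one defined over $\overline{\mathbb{F}}_p$; this isogeny extends over $Z'$ and shows the family is isotrivial, hence lies in a single Newton stratum. Next one must show that $\overline{Z^0}$ is \emph{proper}: this uses the toroidal compactification $\overline{Sh}_{K^p_0}$ of Lan and the extension property of isogenies (\cite{faltingschai}) to see that the semiabelian degeneration over the boundary is in fact abelian, so $\overline{Z^0}$ stays inside $Sh_{K^p_0}$. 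Only now does Viehmann--Wedhorn enter, and in a way you did not anticipate: the EO strata are \emph{quasi-affine} and Hecke-stable, and the minimal stratum is nonempty and contained in $\mathcal{N}_{b_0}$. Combined with Lemma~\ref{heckeorbit} (non-basic points have infinite Hecke orbit, hence positive-dimensional orbit closure), an induction on the EO order shows that any proper Hecke-stable subvariety of $Sh_{K^p_0}$ must meet $\mathcal{N}_{b_0}$. This contradicts the isotriviality conclusion from the first step, since $z \notin \mathcal{N}_{b_0}$.

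In short, you are missing three specific inputs --- Oort's isotriviality theorem, the properness argument via toroidal compactification, and the quasi-affineness of EO strata --- and you have inverted the logical role of the Viehmann--Wedhorn results: they are used to force a proper Hecke-stable variety into the basic locus, not to produce monodromy directly.
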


One might hope to use this Theorem to show the irreducibility of non-basic Newton strata for PEL Shimura varieties similar to the Siegel case.

I am very grateful to Eike Lau, Torsten Wedhorn and Daniel Wortmann for many valuable discussions and comments.

\section{Preliminaries}

\subsection{Shimura PEL-data and their moduli problems}
In this section we recall the definition of a Shimura PEL-datum and the associated moduli space of abelian varieties with additional structures. The main references are \cite{kottwitz} and \cite{lan}. \medskip

A tuple $\mathcal{D}=(B,^*,V,\langle\,,\rangle,\mathcal{O}_B,\Lambda,h)$ is called an integral Shimura PEL-datum, unramified at a positive prime $p  \in \mathbb{Z}$, if it consists of the following data:
\begin{itemize} \item $B$ is a finite-dimensional simple $\mathbb{Q}$-algebra, such that there is an isomorphism $B_{\mathbb{Q}_p} \iso \bigoplus_{\mathfrak{p}\in \mathcal{S}}M_d(F_{\mathfrak{p}})$, where $F$ denotes the center of $B$ and $\mathcal{S}$ the set of primes of $F$ over $p$, with $F_{\mathfrak{p}}/\mathbb{Q}_p$ unramified for all $\mathfrak{p} \in \mathcal{S}$. 
 \item 
$^*$ is a $\mathbb{Q}$-linear positive involution on $B$.
\item $V$ is a finitely generated left $B$-module.
\item $\langle\,,\rangle: V \times V \rightarrow \mathbb{Q}$ is a symplectic form on $V$ with $\langle bv,w\rangle=\langle v,b^*w\rangle$ for all $v,w \in V$ and all $b \in B$.
\item $\mathcal{O}_B$ is a $^*$-invariant $\mathbb{Z}_{(p)}$-order of $B$ such that $$\mathcal{O}_B \otimes_{\mathbb{Z}_{(p)}}\mathbb{Z}_p \iso \bigoplus_{\mathfrak{p}\in \mathcal{S}}M_d(\mathcal{O}_{F,\mathfrak{p}}),$$ where $\mathcal{O}_F$ denotes the ring of integers of $F$ and the isomorphism is induced by the corresponding isomorphism for $B$ described above. 
\item $\Lambda$ is an $\mathcal{O}_B$-invariant $\mathbb{Z}_{(p)}$-lattice in $V$, such that $\langle\,,\rangle$ induces a perfect pairing $\Lambda_{\mathbb{Z}_p} \times \Lambda_{\mathbb{Z}_p} \rightarrow \mathbb{Z}_p$. 
\item $h: \mathbb{C} \hookrightarrow \mbox{End}_B(V) \otimes_{\mathbb{Q}}\mathbb{R}$ is a homomorphism such that, if $\iota$ is the involution on $\mbox{End}_B(V)$ coming from $\langle\,,\rangle$, then $h(\overline{z}) = h(z)^{\iota}$ and the form $(v,w) \mapsto \langle v,h(i)w\rangle$ on $V_{\mathbb{R}}$ is positive definite. 
\end{itemize}
There are algebraic $\mathbb{Q}$-groups $\mathbf{G}$ and $\mathbf{U}$ defined by $$\mathbf{G}(R):=\{g \in \mbox{GL}_B(V\otimes_{\mathbb{Q}}R)\;|\; gg^{\iota}\in R^{\times}\}$$ and $$\mathbf{U}(R):=\{g \in \mbox{GL}_B(V\otimes_{\mathbb{Q}}R)\;|\; gg^{\iota} = 1\}.$$ The derived group of $\mathbf{G}$ is denoted by $\mathbf{G}_1$, hence we have an inclusion $$\mathbf{G}_1 \subset \mathbf{U} \subset \mathbf{G}.$$ \medskip

For each Shimura PEL-datum as above there is the following associated moduli problem due to R. E. Kottwitz (\cite[\S5]{kottwitz}). Let $\mathbb{A}^p_f$ denote the ring of finite adeles with trivial $p$-component and let $K^p \subseteq \mathbf{G}(\mathbb{A}^p_f)$ be an open compact subgroup. Furthermore, we denote by $E$ the reflex field associated to the Shimura PEL-datum $\mathcal{D}$ and by $\mathcal{O}_E$ its ring of integers. Consider the functor $$\mathcal{A}:=\mathcal{A}_{\mathcal{D},K^p} = (\mbox{Sch}/\mathcal{O}_{E,(p)}) \rightarrow (\mbox{Sets})$$ from locally noetherian $\mathcal{O}_{E,(p)}$-Schemes to the category of sets that maps an $\mathcal{O}_{E,(p)}$-scheme $S$ to the set of isomorphism classes of tuples $(A,\lambda,i, \overline{\eta})$, where:
\begin{itemize} \item  $A$ is an abelian scheme over $S$.
 \item $\lambda: A \rightarrow A^{\vee}$ is a $\mathbb{Z}^{\times}_{(p)}$-polarization.
\item $i: \mathcal{O}_B \rightarrow \mbox{End}(A)\otimes_{\mathbb{Z}}\mathbb{Z}_{(p)}$ is a $\mathbb{Z}_{(p)}$-homomorphism satisfying $$\lambda \circ i(\alpha^{*})=i(\alpha)^{\vee} \circ \lambda$$ for all $\alpha \in \mathcal{O}_B$.
\item $\overline{\eta}$ is a prime-to-$p$ level $K^p$-structure, see the next section for details.
\end{itemize}

An isomorphism $(A,\lambda,i, \overline{\eta}) \iso (A',\lambda',i', \overline{\eta'})$ between two tuples is given by a $\mathbb{Z}^{\times}_{(p)}-$isogeny $f: A \rightarrow A'$, such that $\lambda = rf^{\vee}\circ \lambda'\circ f$ for some positive $r \in \mathbb{Z}^{\times}_{(p)}$, $f\circ i = i'\circ f$ and $\overline{\eta'}=f \circ \overline{\eta}$.

Furthermore, we assume that the determinant condition of Kottwitz is fulfilled, see \cite[\S5]{kottwitz}. Then, if the group $K^p$ is sufficiently small, the functor $\mathcal{A}$ is representable by a quasi-projective smooth scheme $\mathcal{S}\mathfrak{h}_{K^p}$ over $\mathcal{O}_{E, (p)}$, see for instance \cite[Section 2.3.3]{lan}. We fix a prime $\nu$ of $E$ over $p$ with residue field $\kappa$ and denote by $$Sh_{K^p}:= \mathcal{S}\mathfrak{h}_{K^p} \otimes \overline{\kappa}$$ the special fiber of $\mathcal{S}\mathfrak{h}_{K^p}$ over $\nu$. 

\subsection{Level $K^p$-structures and Hecke correspondences}

Let $(A,\lambda,i, \overline{\eta})$ be a tuple as above over a base scheme $S$ and $\overline{s}\in S$ a geometric point with residue field $\kappa(\overline{s})$. We denote by $A_{\overline{s}}$ the fiber of $A$ over $\overline{s}$ and by $T(A_{\overline{s}})$ its Tate module. Define $V^p(A_{\overline{s}}):= T(A_{\overline{s}}) \otimes_{\mathbb{Z}}\mathbb{A}^p_f$. A prime-to-$p$ level structure is an $\mathcal{O}_B$-linear isomorphism $$\eta:V_{\mathbb{A}^p_f} \iso V^p(A_{\overline{s}})$$ such that the following holds: the Weil pairing $e: T(A_{\overline{s}}) \times T(A^{\vee}_{\overline{s}}) \rightarrow \mathbb{A}^p_f(1)$ gives
a pairing $<\,,>_{\lambda}: V^p(A_{\overline{s}})\times V^p(A_{\overline{s}}) \rightarrow \mathbb{A}^p_f(1)$ using the polarization $\lambda:A \rightarrow A^{\vee}$, and if we identify $\mathbb{A}^p_f(1)$ with $\mathbb{A}^p_f$ we require that $\eta$ maps the pairing $<\,,>: V_{\mathbb{A}^p_f} \times V_{\mathbb{A}^p_f} \rightarrow \mathbb{A}^p_f$ to an $(\mathbb{A}^p_f)^{\times}$-multiple
of $<\,,>_{\lambda}$. In particular, for a connected scheme $S$ the algebraic fundamental group $\pi_1(S,\overline{s})$ acts continuously on $V_{\mathbb{A}_f^p}$ as symplectic similitudes, that is there is a continuous morphism $$\rho_{A_{\overline{s}}}:\pi_1(S,\overline{s}) \rightarrow \mathbf{G}(\mathbb{A}^p_f).$$ We define $\overline{\eta} := \eta \circ K^p$ and call this a prime-to-$p$ level $K^p$-structure for an arbitrary scheme $S$ if $\rho_{A_{\overline{s}}}$ factors through $K^p$ for all geometric points $\overline{s}$ of all connected components of $S$. \medskip

We now describe the prime-to-$p$ Hecke correspondences on a Shimura variety $Sh_{K^p_0}$. Let $(\mathcal{S}\mathfrak{h}_{K^p})_{K^p \subseteq \mathbf{G}(\mathbb{A}^p_f)}$ be the tower of all Shimura schemes. There is an \'etale covering $\mathcal{S}\mathfrak{h}_{K^p_1} \rightarrow \mathcal{S}\mathfrak{h}_{K^p_2}$ given by extending the prime-to-$p$ level $K^p_1$-structure to $K^p_2$ for each inclusion $K^p_1 \subseteq K^p_2$ of open compact subgroups of $\mathbf{G}(\mathbb{A}^p_f)$. The group $\mathbf{G}(\mathbb{A}^p_f)$ acts on $(\mathcal{S}\mathfrak{h}_{K^p})_{K^p \subseteq \mathbf{G}(\mathbb{A}^p_f)}$ defined by $$\mathcal{S}\mathfrak{h}_{K^p} \rightarrow \mathcal{S}\mathfrak{h}_{g^{-1}K^pg},\; (A,\lambda,i,\overline{\eta}) \mapsto (A,\lambda,i,\overline{\eta g})$$ for a $g \in \mathbf{G}(\mathbb{A}^p_f)$. The tower $(\mathcal{S}\mathfrak{h}_N)_N \rightarrow \mathcal{S}\mathfrak{h}_{K^p_0}$ over all open normal subgroups $N \subseteq K^p_0$ is a pro-\'etale Galois covering with Galois group $\varprojlim_N K^p/N = K^p$. Let $k \supseteq \kappa$ be an algebraically closed field, $x \in Sh_{K_0^p}(k)$ a point and fix a point $\tilde{x} \in (Sh_{K^p}(k))_{K^p \subseteq \mathbf{G}(\mathbb{A}^p_f)}$ over $x$. We define the prime-to-$p$ Hecke orbit $\mathcal{H}^p(x)$ of $x$ as the projection of $\mathbf{G}(\mathbb{A}^p_f)\cdot \tilde{x}$ to $Sh_{K^p_0}(k)$ and the $l$-Hecke orbit $\mathcal{H}_l(x)$ for a prime $l \neq p$ as the projection of $\mathbf{G}(\mathbb{Q}_l)\cdot \tilde{x}$ to $Sh_{K^p_0}(k)$ under the canonical morphism $\mathbf{G}(\mathbb{Q}_l) \hookrightarrow \mathbf{G}(\mathbb{A}^p_f)$. One easily sees that the Hecke correspondences are independent from the choice of the point $\tilde{x}$. Furthermore, $\mathcal{H}^p(x)$ and $\mathcal{H}_l(x)$ are countable sets due to the surjective map $\mathbf{G}(\mathbb{A}^p_f)/K^p_0 \twoheadrightarrow \mathcal{H}^p(x)$. 

\section{Monodromy groups}
Let $Z \subseteq Sh_{K^p_0}$ be a smooth, locally closed subscheme. We assume that $Z(k)$ is closed under prime-to-$p$ Hecke correspondences, that is $\mathcal{H}^p(Z(k)) \subseteq Z(k)$. Let $z \in Z$ denote the generic point of an irreducible component $Z^0$ of $Z$, which is also a connected component of $Z$. We say that the prime-to-$p$ Hecke correspondences act transitively on the set of connected components of $Z$ if $$\Pi_0(\overline{\mathcal{H}^p(Z^0(k))} \cap Z) \rightarrow \Pi_0(Z)$$ is surjective, where $\overline{\mathcal{H}^p(Z^0(k))}$ denotes the Zariski closure of the Hecke correspondences. We also consider an analogue definition for the $l$-Hecke correspondences. Furthermore, let $$\overline{\rho^p}: \pi_1(Z^0,\overline{z}) \rightarrow K^p_0 $$ respectively $$\overline{\rho_l}:\pi_1(Z^0,\overline{z}) \rightarrow K_{0,l} := \mbox{im}(K^p_0 \rightarrow \mathbf{G}(\mathbb{Q}_l))$$ denote the $K^p_0$ respectively $K_{0,l}$-conjugacy classes of representations with respect to the tuple $(A,\lambda,i,\overline{\eta})$ corresponding to the morphism $$Z^0 \hookrightarrow Sh_{K^p_0} \rightarrow  \mathcal{S}\mathfrak{h}_{K^p_0}.$$ We denote by $P$ the finite set of primes of $\mathbb{Q}$ containing $p$ and the primes $l$ such that some simple component of $\mathbf{G}_1$ is $\mathbb{Q}_l$-anisotropic. Let $\mathbb{A}^P_f$ be the finite adeles with trivial $P$ components. There is the following proposition, proven by C.-L. Chai in the Siegel case, \cite[Proposition 4.1 and 4.5.4]{chaimonodromy}:

\begin{prop} \label{monodromy}
Let $\mathcal{D}$ be an integral Shimura PEL-datum, unramified at a prime $p$. Let $\mathbf{H}_1, \dots,\mathbf{H}_n$ be the simple components of the derived group $\mathbf{G}_1$. Let $Z \subseteq Sh_{K^p_0}$ be a smooth, locally closed subscheme. \begin{enumerate} \item Suppose that for a prime $l \notin P$ the $l$-Hecke correspondences of the simply connected covering of $\mathbf{G}_1$ act transitively on the set of connected components of $Z$. If for all $i=1,\dots,n$ the set $\mbox{im}(\rho_l) \cap \mathbf{H}_i(\mathbb{Q}_l)$ is not finite, then $\mbox{im}(\rho_l) = K_{0,l}$. \item Suppose that the prime-to-$P$ Hecke correspondences of the simply connected covering of $\mathbf{G}_1$ act transitively on the set of connected components of $Z$. If for all $i=1,\dots,n$ and all $l \notin P$ the set $\mbox{im}(\rho_l) \cap \mathbf{H}_i(\mathbb{Q}_l)$ is not finite, then $\mbox{im}(\rho^P) = K^P_0$. \end{enumerate}

\end{prop}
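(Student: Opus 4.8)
The plan is to reduce Proposition \ref{monodromy} to Chai's Siegel-case result by exploiting the functoriality of the monodromy representation and the structure theory of reductive groups. First I would observe that the image of $\rho_l$ is a closed subgroup $H_l \subseteq K_{0,l}$, and that the transitivity hypothesis on the Hecke correspondences of the simply connected covering of $\mathbf{G}_1$ translates — via the standard dictionary between Hecke orbits and the action of $\pi_1$ on the tower, together with the fact that the connected components of $\mathrm{Sh}_{K^p}$ are governed by $\pi_0$ of the tower — into the statement that $H_l$ surjects onto the group of connected components controlled by the simply connected cover; concretely, the closure of the image of $\pi_1$ in the profinite completion relevant to $\pi_0$ is as large as transitivity forces it to be. The key point I want to extract from this is that $H_l \cdot G_1^{sc}(\mathbb{Q}_l)^+$ (or the appropriate arithmetic subgroup) is open in $K_{0,l}$, so that it suffices to control the intersection $H_l \cap \mathbf{G}_1(\mathbb{Q}_l)$, i.e. to show it is open in $\mathbf{G}_1(\mathbb{Q}_l)$.

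Next I would treat the simple factors $\mathbf{H}_i$ one at a time. Fix $i$ and let $H_{l,i} := \overline{H_l \cap \mathbf{H}_i(\mathbb{Q}_l)}$ (Zariski, or rather its Lie-theoretic closure inside the $l$-adic group). By hypothesis this is infinite, hence a noncompact — or at least positive-dimensional — $l$-adic analytic subgroup of the simple group $\mathbf{H}_i(\mathbb{Q}_l)$, which for $l \notin P$ is $\mathbb{Q}_l$-isotropic. The core of the argument is then a rigidity/maximality statement: a closed subgroup of a simple isotropic $p$-adic group that is normalized by enough elements (here, by the full image $H_l$, which is itself large by the transitivity input) and is infinite must be open. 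In Chai's setting this is \cite[Proposition 4.1]{chaimonodromy}; in general one invokes the classification of closed subgroups of semisimple $p$-adic Lie groups (Pink's theorem on compact subgroups of $p$-adic groups, or Borel's density-type results for the Zariski closure of the monodromy) to conclude that the Lie algebra of $H_{l,i}$ is an ideal in $\mathrm{Lie}(\mathbf{H}_i)$ stable under the action of $H_l$; since $\mathbf{H}_i$ is simple and the Lie algebra is nonzero, it is everything, so $H_{l,i}$ is open in $\mathbf{H}_i(\mathbb{Q}_l)$. Running over all $i$ and using that $\mathbf{G}_1 = \prod_i \mathbf{H}_i$ up to isogeny, together with a Goursat-type argument to rule out the image being a "diagonal" subgroup across distinct factors (the factors are non-isomorphic as $l$-adic groups, or one separates them by the Hecke action), gives that $H_l \cap \mathbf{G}_1(\mathbb{Q}_l)$ is open in $\mathbf{G}_1(\mathbb{Q}_l)$.

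Combining the two inputs, $H_l$ is open in $K_{0,l}$ and contains an open subgroup of the derived group; to upgrade openness to equality I would use that $\rho_l$ respects the symplectic similitude structure on $V_{\mathbb{Q}_l}$ — the multiplier character $\mathbf{G} \to \mathbb{G}_m$ composed with $\rho_l$ must hit all of $\mathbb{Z}_{l}^\times$ (or the relevant image), because the cyclotomic character already does, via the Weil pairing; so $H_l$ surjects onto $K_{0,l}/\mathbf{G}_1(\mathbb{Q}_l) \cap K_{0,l}$. An open subgroup surjecting onto the "torus part" and containing an open subgroup of the "derived part" — here one needs that $K^p_0$ was chosen so that $K_{0,l}$ has no small normal subgroups interfering, which is part of the standing hypothesis that $K^p$ is sufficiently small, or one argues directly that $\mathbf{G}_1(\mathbb{Q}_l)$ has no proper open normal subgroups (it is generated by its unipotent elements since it is isotropic and almost simple) — forces $H_l = K_{0,l}$. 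This proves (1). For part (2), I would run the same argument prime by prime: the prime-to-$P$ transitivity hypothesis restricts to an $l$-Hecke transitivity statement for each $l \notin P$, so part (1) applies to give $\mathrm{im}(\rho_l) = K_{0,l}$ for every such $l$; then $\mathrm{im}(\rho^P)$ is a closed subgroup of $K^P_0 = \prod_{l \notin P}' K_{0,l}$ surjecting onto each factor, and a further strong-approximation / Goursat argument — using that the $K_{0,l}$ are almost simple modulo their centers and have no common quotients across distinct $l$ — promotes the product of local equalities to the global equality $\mathrm{im}(\rho^P) = K^P_0$.

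I expect the main obstacle to be the step where one passes from "the $l$-adic monodromy has infinite image in each simple factor" to "it is open in each simple factor." In the Siegel case $\mathbf{G}_1 = \mathbf{Sp}_{2g}$ is $\mathbb{Q}_l$-split and Chai can use the explicit structure of $\mathbf{Sp}_{2g}(\mathbb{Q}_l)$; here the $\mathbf{H}_i$ are inner or outer forms of types $A$, $C$ (and in the PEL setting also $D$), which are only isotropic for $l \notin P$ and may fail to be quasi-split. The argument must therefore be set up so that it only uses isotropy (hence the presence of a nontrivial split torus and unipotent radicals of proper parabolics), plus the fact that the monodromy is normalized by the large group coming from transitivity, to force the Lie algebra to be an ideal; the delicate part is controlling the normalizer action and ensuring one does not merely land in a proper Levi or a proper parabolic. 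A secondary difficulty is the Goursat step ruling out diagonal images, which requires knowing the $\mathbf{H}_i(\mathbb{Q}_l)$ are pairwise non-isomorphic as topological groups — true generically, but needing the Hecke-transitivity hypothesis to separate factors when accidental isomorphisms occur.
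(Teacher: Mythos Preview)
Your proposal has a genuine gap at the central step, and also misidentifies how the Hecke-transitivity hypothesis is actually used.

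\textbf{The missing mechanism.} You want to show that the Zariski closure of $H_l \cap \mathbf{H}_i(\mathbb{Q}_l)$ has Lie algebra an ideal in $\mathrm{Lie}(\mathbf{H}_i)$, and you justify this by saying it is ``normalized by enough elements (here, by the full image $H_l$, which is itself large by the transitivity input).'' But being normalized by $H_l$ is tautological and gives nothing; and you never explain what ``large by transitivity'' means concretely or why it forces the Lie algebra to be an ideal. Pink's theorem and Borel density do not supply this: Pink classifies compact subgroups, not arbitrary closed ones, and Borel density needs a lattice, which you do not have. The paper's proof extracts something quite different from transitivity. One sets up the pro-\'etale tower $\widetilde{Z}\to Z$ and identifies $\Pi_0(\widetilde{Z})$ with $\mathbf{G}_1(\mathbb{Q}_l)/\mathrm{Stab}_{\mathbf{G}_1}(\widetilde{Z^0}^0)$; since $\Pi_0(\widetilde{Z})$ is profinite, this quotient is \emph{compact}. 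Because $\mathrm{Stab}_{\mathbf{G}_1}(\widetilde{Z^0}^0)\subset N_{\mathbf{G}_1}(\mathbf{M})(\mathbb{Q}_l)$ for $\mathbf{M}=\overline{\mathrm{im}(\rho_l)}^0$, the quotient $\mathbf{G}_1(\mathbb{Q}_l)/N_{\mathbf{G}_1}(\mathbf{M})^0(\mathbb{Q}_l)$ is compact. Now Borel--Tits structure theory (Propositions 8.4--8.6 and 9.3 of \cite{boreltits}) applies: a connected subgroup with cocompact $\mathbb{Q}_l$-points contains a maximal $\mathbb{Q}_l$-split unipotent, hence a nontrivial normal connected subgroup of $\mathbf{G}_{1,\mathbb{Q}_l}$. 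Since the normalizer is reductive ($\mathbf{M}$ being semisimple), one bootstraps to $\mathbf{M}=\mathbf{G}_{1,\mathbb{Q}_l}$. The compactness-of-the-quotient step is the heart of the argument and is absent from your outline; without it your ideal claim is unsupported, and the ``proper Levi or proper parabolic'' obstruction you flag at the end is exactly what would go wrong.

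\textbf{The upgrade from open to equal.} Your multiplier/cyclotomic-character argument is off target: the issue is not surjectivity onto the torus quotient but rather ruling out a proper finite-index subgroup. The paper instead argues that once $M$ is open, $\Pi_0(\widetilde{Z})$ is finite; but $\mathbf{G}_1^{sc}(\mathbb{Q}_l)$ acts transitively on it, and by Kneser--Tits (valid since $l\notin P$ makes each $\mathbf{H}_i^{sc}$ isotropic) $\mathbf{G}_1^{sc}(\mathbb{Q}_l)$ has no proper finite-index subgroup, forcing $\Pi_0(\widetilde{Z})=\{1\}$ and hence $M=K_{0,l}$. Your parenthetical about $\mathbf{G}_1(\mathbb{Q}_l)$ being generated by unipotents is in the right spirit but is applied to the wrong quotient. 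Finally, no Goursat argument is needed anywhere: the factors $\mathbf{H}_i$ are handled simultaneously via the hypothesis that $\mathrm{im}(\rho_l)\cap\mathbf{H}_i(\mathbb{Q}_l)$ is infinite for each $i$, and part (2) follows by running the identical tower argument with $K_0^P$ in place of $K_{0,l}$, not by assembling local conclusions.
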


\begin{proof}
Of course (1) implies (2). So let us show (1).
Choose a prime $l \notin P$. Let $N \subseteq K^p_0$ denote an open normal subgroup such that $K^p_0/N$ is trivial outside of $l$, and consider the pro-\'etale covering $$(Sh_N)_{N \subseteq \mathbf{G}(\mathbf{A}^p_f)} \rightarrow Sh_{K^p_0}.$$ The group $K_{0,l}$ acts continuously on this covering via $l$-Hecke correspondences. Consider the pro-\'etale covering of $Z^0$ obtained by base change with $Z^0 \hookrightarrow Sh_{K^p_0}$,  $$\widetilde{Z^0}:=(Sh_N\times_{Sh_{K^p_0}} Z^0)_{N \subseteq \mathbf{G}(\mathbf{A}^p_f)}  \rightarrow Z^0,$$ together with the induced action of $K_{0,l}$. We choose a point $\tilde{z} \in \widetilde{Z^0}$ over $\overline{z}$ and thus obtain a connected component $\widetilde{Z^0}^0$. The algebraic fundamental group of $Z^0$ acts on $\widetilde{Z^0}$ through the obvious morphism $$\pi_1(Z^0,\overline{z}) \rightarrow \pi_1(Sh^0_{K^p_0}, \overline{z}) \rightarrow \Aut_{Sh_{K^p_0}}((Sh_N)_{N \subseteq \mathbf{G}(\mathbf{A}^p_f)}) = K^p_0,$$ where $Sh^0_{K^p_0}$ is the connected component containing $Z^0$. One easily sees that this morphism coincides with the morphism $$\rho^p:\pi_1(Z^0,\overline{z}) \rightarrow  K^p_0$$ coming from the prime-to-$p$ level $K^p_0$-structure. It follows from the general theory of the fundamental group that $M:=\mbox{im}(\rho_l)$ is the stabilizer of the connected component $\widetilde{Z^0}^0$ with respect to the $K_{0,l}$-action. We get a homeomorphism $K_{0,l}/M \iso \Pi_0(\widetilde{Z^0})$ of profinite sets. For the pro-\'etale covering $$\widetilde{Z}:=(Sh_N\times_{Sh_{K^p_0}} Z)_{N \subseteq \mathbf{G}(\mathbf{A}^p_f)}  \rightarrow Z$$ we get an analogous continuous bijection $$\mathbf{G}_1(\mathbb{Q}_l)/\mbox{Stab}_{\mathbf{G}_1}(\widetilde{Z^0}^0) \iso \Pi_0(\widetilde{Z})$$ since the $l$-Hecke correspondences act transitively on the connected components of $Z$ and hence also on the connected components of $\widetilde{Z}$. It follows with the same arguments as in \cite[Lemma 2.8]{chaimonodromy} that this isomorphism is a homeomorphism. 

\medskip
As explained in the proof of \cite[Proposition 4.1]{chaimonodromy}, the algebraic group $$\mathbf{M} := \overline{\mbox{im}(\rho_l)}^0 \subseteq \mathbf{G}_{\mathbb{Q}_l}$$ is semisimple, hence it lies in $\mathbf{G}_{1, \mathbb{Q}_l}$, and we have $$\mbox{Stab}_{\mathbf{G}_1}(\widetilde{Z^0}^0) \subset N_{\mathbf{G}_1}(\mathbf{M})(\mathbb{Q}_l).$$ Since there is a continuous surjection $$\mathbf{G}_1(\mathbb{Q}_l)/\mbox{Stab}_{\mathbf{G}_1}(\widetilde{Z^0}^0) \twoheadrightarrow \mathbf{G}_1(\mathbb{Q}_l)/N_{\mathbf{G}_1}(\mathbf{M})(\mathbb{Q}_l)$$ and the set on the left is profinite, the quotient $\mathbf{G}_1(\mathbb{Q}_l)/\mathbf{N}(\mathbb{Q}_l)$ is compact, where we set $$\mathbf{N}:= N_{\mathbf{G}_1}(\mathbf{M})^0.$$ Then \cite[Proposition 9.3]{boreltits} implies that $\mathbf{N}$ contains a maximal $\mathbb{Q}_l$-split solvable subgroup $A$ of $\mathbf{G}_{1, \mathbb{Q}_l}$, and because $\mathbf{G}_{1, \mathbb{Q}_l}$ is isotropic, \cite[Proposition 8.4,8.5]{boreltits} shows that $A \neq 0$ is the unipotent radical of a minimal parabolic subgroup of $\mathbf{G}_{1, \mathbb{Q}_l}$. Finally, we use \cite[Proposition 8.6]{boreltits} and conclude that $\mathbf{N}$ contains the smallest normal subgroup of $N_{\mathbf{G_1}}(\mbox{rad}_u(\mathbf{N}))$ containing $A$. But $\mathbf{N}$ is reductive since $\mathbf{G}_1$ and $\mathbf{M}$ are semisimple (see \cite[Lemma 3.3]{chaimonodromy}), hence we showed that it contains a nontrivial normal connected subgroup of $\mathbf{G}_{1, \mathbb{Q}_l}$. The assumptions on $\rho_l$ then yield $\mathbf{N} = \mathbf{G}_{1, \mathbb{Q}_l}$, so $\mathbf{M}$ itself is a nontrivial normal subgroup of $\mathbf{G}_{1, \mathbb{Q}_l}$ intersecting all simple subgroups. This implies $\mathbf{M} = \mathbf{G}_{1, \mathbb{Q}_l}$.
\medskip

We now know that the Lie algebras of the $l$-adic Lie group $M$ and its Zariski closure coincide, because $\mathbf{M}$ is semisimple (see \cite[Corollary 7.9]{borellinear}), so $M \subseteq K_{0,l}$ contains an open subgroup. It follows that $$K_{0,l}/M \iso \Pi_0(\widetilde{Z^0})$$ is finite. Because $Z$ is quasi-projective, it has only finitely many connected components, which implies that $\Pi_0(\widetilde{Z})$ is also finite. If $\#\Pi_0(\widetilde{Z}) \neq 1$, then the simply connected group $\mathbf{G}^{sc}_1(\mathbb{Q}_l) = \mathbf{H}_1^{sc}(\mathbb{Q}_l)\times \dots \times \mathbf{H}_n^{sc}(\mathbb{Q}_l)$ contains a nontrivial subgroup of finite index. But the Kneser-Tits conjecture for simple and simply connected $\mathbb{Q}_l$-isotropic groups implies that all the groups $\mathbf{H}^{sc}_i(\mathbb{Q}_l)$ have no nontrivial noncentral normal subgroups, see \cite[Theorem 7.1, 7.6]{platonovrapinchuk}. It follows that they do not contain any nontrivial subgroup of finite index, and the same holds for $\mathbf{G}_1^{sc}(\mathbb{Q}_l)$. This shows $$K_{0,l}/M \iso \Pi_0(\widetilde{Z^0}) = \{1\}.$$ 
\end{proof}

\begin{cor} \label{connected}
 If the conditions of Proposition \ref{monodromy} are fulfilled, then $Z$ is connected.
\end{cor}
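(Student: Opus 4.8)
The plan is to read the connectedness of $Z$ off the argument that already proves Proposition~\ref{monodromy}, with essentially no extra work. The key observation is that this argument does not merely compute $\im(\rho_l)$ (respectively $\im(\rho^P)$): along the way it establishes that the pro-\'etale covering $\widetilde{Z}$ of $Z$ constructed there is \emph{connected}. Concretely, after fixing $l\notin P$ in the situation of part~(1) and forming $\widetilde{Z}=(Sh_N\times_{Sh_{K^p_0}}Z)_N$, one identifies $\Pi_0(\widetilde{Z})$ with a homogeneous space of the simply connected group $\mathbf{G}_1^{sc}(\mathbb{Q}_l)$ (this uses the transitivity of the $l$-Hecke correspondences of $\mathbf{G}_1^{sc}$ on the connected components of $Z$), shows that it is finite, and then invokes the Kneser--Tits phenomenon for the simple, simply connected, $\mathbb{Q}_l$-isotropic factors $\mathbf{H}_i^{sc}$ to conclude that a finite homogeneous space of $\mathbf{G}_1^{sc}(\mathbb{Q}_l)$ must be a point; hence $\#\Pi_0(\widetilde{Z})=1$. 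In the situation of part~(2) the same reasoning applies with the full prime-to-$P$ tower replacing the $l$-adic one.

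So the first step I would take is simply to isolate this statement, namely that $\widetilde{Z}$ is connected. The second step is to descend it to $Z$: the structure morphism $\widetilde{Z}\to Z$ is the base change along $Z\hookrightarrow Sh_{K^p_0}$ of the surjective pro-\'etale tower $(Sh_N)_N\to Sh_{K^p_0}$, hence it is surjective, and therefore the induced map $\Pi_0(\widetilde{Z})\to\Pi_0(Z)$ on sets of connected components is onto. Since $\Pi_0(\widetilde{Z})$ is a single point, so is $\Pi_0(Z)$, i.e.\ $Z$ is connected. Alternatively one may note that $\widetilde{Z^0}$ is a non-empty open and closed subscheme of the connected scheme $\widetilde{Z}$, so $\widetilde{Z^0}=\widetilde{Z}$, and pushing forward along the surjection $\widetilde{Z}\to Z$ gives $Z^0=Z$ directly.

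There is no genuine obstacle here; the one point that requires care is to keep $\Pi_0(\widetilde{Z})$ and $\Pi_0(\widetilde{Z^0})$ apart. The proof of Proposition~\ref{monodromy} exploits $\Pi_0(\widetilde{Z^0})=\{1\}$ only to get $\im(\rho_l)=K_{0,l}$, whereas it is the sharper output $\Pi_0(\widetilde{Z})=\{1\}$ of the Kneser--Tits step that yields connectedness of $Z$ itself rather than of its distinguished component $Z^0$ alone. Once that distinction is made, the corollary is immediate.
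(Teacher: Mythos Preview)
Your proposal is correct and follows exactly the route the paper intends: the proof of Proposition~\ref{monodromy} already shows $\#\Pi_0(\widetilde{Z})=1$ via the Kneser--Tits step, and surjectivity of $\widetilde{Z}\to Z$ then forces $Z$ to be connected. Your care in distinguishing $\Pi_0(\widetilde{Z})$ from $\Pi_0(\widetilde{Z^0})$ is well placed, since it is precisely the former that gives the corollary.
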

\begin{proof}
 This follows at once from the proof above.
\end{proof}

\section{Hecke correspondences and Newton strata}

From now on, we assume that the Shimura PEL-datum $\mathcal{D}$ is of type $A$ or $C$, which implies that the group $\mathbf{G}$ is connected. Let $x \in Sh_{K^p_0}(k)$ be a geometric point, given by a tuple $(A,\lambda,i,\overline{\eta})$. We define a functor by $$\mathbf{I}_x(R):=\{g \in \mbox{End}_B(A) \otimes_{\mathbb{Z}_{(p)}}R \;|\; gg^* \in R^{\times}\},$$ where $R$ is a $\mathbb{Z}_{(p)}$-algebra and $^*$ is the Rosati involution with respect to the polarization $\lambda$. This defines a reductive group over $\mathbb{Q}$. We fix an element in the prime-to-$p$ level $K_0^p$-structure $\overline{\eta}$, which gives together with the injection $$\mbox{End}(A)\otimes_{\mathbb{Z}}\mathbb{A}_f^p \hookrightarrow \mbox{End}(V^p(A))$$ proved by J. Tate an injection $$\mathbf{I}_{x, \mathbb{A}^p_f} \hookrightarrow \mathbf{G}_{\mathbb{A}^p_f}$$ of algebraic groups. One easily sees that the tuple $(A,\lambda,i,\overline{\eta})$ is isomorphic to $(A,\lambda,i,\overline{\eta g})$ if and only if $g$ lies in the set  $\mathbf{I}_x(\mathbb{Z}_{(p)})K^p_0$, where we consider $\mathbf{I}_x(\mathbb{Z}_{(p)}) \subset \mathbf{G}(\mathbb{A}^p_f)$ with respect to the chosen element of the level structure. If $g \in \mathbf{G}(\mathbb{Q}_l) \subset \mathbf{G}(\mathbb{A}^p_f)$, then the above tuples are isomorphic if and only if $$g \in \mathbf{G}(\mathbb{Q}_l) \cap \mathbf{I}_x(\mathbb{Z}_{(p)})K^p_0 = \mathbf{I}_{x,l}(\mathbb{Z}_{(p)})K_{0,l},$$ where $\mathbf{I}_{x,l}(\mathbb{Z}_{(p)})$ denotes the group of $\mathbb{Z}_{(p)}$-isogenies in $\mathbf{I}_x(\mathbb{Z}_{(p)}) \subset \mathbf{G}(\mathbb{A}^p_f)$ whose $l'$-components lie in $K_{0,l'}$ for all primes $l' \neq l$. It follows that there are bijections \begin{eqnarray*} \mathbf{I}_x(\mathbb{Z}_{(p)})\backslash \mathbf{G}(\mathbb{A}^p_f)/K^p_0 \iso \mathcal{H}^p(x) \\ \mathbf{I}_{x,l}(\mathbb{Z}_{(p)})\backslash \mathbf{G}(\mathbb{Q}_l)/K_{0,l} \iso \mathcal{H}_l(x) \end{eqnarray*}

We are interested in points with infinite Hecke orbit. It turns out that $\mathcal{H}^p(x)$ is infinite if $x$ does not lie in the basic locus of the Newton stratification of $Sh_{K_0^p}$. For an overview of the Newton stratification see for instance \cite[7.2]{wedhornviehmann} and the references therein. Here we give a short description of the properties that are important for the purpose of this paper. \medskip

Let $L$ denote the quotient field of the Witt ring of $k$ with Frobenius morphism $\sigma$. Let $B(\mathbf{G})$ be the set of $\mathbf{G}(L)-\sigma$-conjugacy classes $\{g^{-1}b\sigma(g) \;|\; g \in \mathbf{G}(L)\}$  of elements $b \in \mathbf{G}(L)$. There is a partial ordering on $B(\mathbf{G})$ and a finite subset $B(\mathbf{G},\mu) \subset B(\mathbf{G})$ containing a unique maximal element $b_{\mu}$ and a unique minimal element $b_0$, see \cite[\S6]{kottwitziso2}. Here $\mu$ denotes a dominant cocharacter of a maximal torus of $\mathbf{G}$ defined by the Shimura PEL-datum $\mathcal{D}$. A $\sigma$-conjugacy class $\overline{b}$ is called \emph{basic} if it contains an element lying in $\mathbf{T}(L)$, where $\mathbf{T}$ is an elliptic maximal torus of $\mathbf{G}$. There is a stratification $$Sh_{K^p_0} = \bigcup_{b \in B(\mathbf{G}, \mu)}\mathcal{N}_b$$ with locally closed subsets $\mathcal{N}_b \subset Sh_{K_0^p}$ such that $\overline{\mathcal{N}_b} \subseteq \bigcup _{b \leq b'}\mathcal{N}_{b'}$. For each $b \in \mathbf{G}(L)$ there is an affine algebraic $\mathbb{Q}_p$-group $\mathbf{J}_b$ defined by $$\mathbf{J}_b(R):= \{g \in \mathbf{G}(R \otimes_{\mathbb{Q}_p}L)\;|\; g(b\sigma)=(b\sigma)g\},$$ see \cite[Proposition 1.12]{rapoportzink}. Observe that there is an injection $\mathbf{I}_{x,\mathbb{Q}_p} \hookrightarrow \mathbf{J}_b$ of algebraic groups. Recall that a geometric point $x = (A,\lambda, i, \overline{\eta})$ is called \emph{hypersymmetric} if $\mathbf{I}_{x, \mathbb{Q}_p} \iso \mathbf{J}_b$. We have the following lemma, which was observed by C.-L. Chai in the Siegel case in \cite[Proposition 1]{chaiordinary}.       

\begin{lemma} \label{heckeorbit}
 Let $x \in Sh_{K^p_0}(k)$ be a geometric point. \begin{enumerate} \item If $x \nin \mathcal{N}_{b_0}(k)$, then $\mathcal{H}_l(x)$ is not finite for all primes $l$ such that $\mathbf{G}_{\mathbb{Q}_l}$ is split. In particular, $\mathcal{H}^p(x)$ is not finite. \item If $x \in \mathcal{N}_{b_0}(k)$, then $\mathcal{H}^p(x)$ is finite. \end{enumerate}
\end{lemma}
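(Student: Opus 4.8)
### Proof proposal for Lemma~\ref{heckeorbit}

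\textbf{Overall strategy.}
The plan is to translate both statements into assertions about double cosets, using the bijections
$\mathbf{I}_x(\mathbb{Z}_{(p)})\backslash \mathbf{G}(\mathbb{A}^p_f)/K^p_0 \iso \mathcal{H}^p(x)$ and
$\mathbf{I}_{x,l}(\mathbb{Z}_{(p)})\backslash \mathbf{G}(\mathbb{Q}_l)/K_{0,l} \iso \mathcal{H}_l(x)$ established just above the statement. So $\mathcal{H}_l(x)$ is finite if and only if $\mathbf{I}_{x,l}(\mathbb{Z}_{(p)})$ acts with finitely many orbits on $\mathbf{G}(\mathbb{Q}_l)/K_{0,l}$, and similarly for $\mathcal{H}^p(x)$. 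Since $K_{0,l}$ (resp. $K^p_0$) is open compact, this is a question about whether the closure of $\mathbf{I}_{x,l}(\mathbb{Z}_{(p)})$ (resp.\ a suitable arithmetic subgroup) is of finite index / cocompact in the relevant adelic group — i.e.\ a question about the relative sizes of the reductive groups $\mathbf{I}_x$ and $\mathbf{G}$. The key input is that $\mathbf{I}_{x,\mathbb{Q}_p}$ injects into $\mathbf{J}_{b}$, and $\mathbf{J}_b$ is an inner form of a Levi of $\mathbf{G}_{\mathbb{Q}_p}$ whose size is governed by the Newton point of $b$; for $b = b_0$ basic, $\mathbf{J}_{b_0}$ is an inner form of $\mathbf{G}_{\mathbb{Q}_p}$ itself, while for non-basic $b$ it is strictly smaller.

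\textbf{Part (2): $x \in \mathcal{N}_{b_0}(k)$.}
When $b = b_0$ is basic, $\mathbf{J}_{b_0}$ has the same rank as $\mathbf{G}_{\mathbb{Q}_p}$ and is an inner form of it; one knows (this is essentially Kottwitz's description, or one can cite the standard fact that for hypersymmetric / basic points the relevant centralizer group has full rank) that $\mathbf{I}_{x,\mathbb{Q}}$ is an inner form of $\mathbf{G}$, so in particular $\mathbf{I}_x(\mathbb{Q})$ is a lattice in $\mathbf{G}(\mathbb{A}_f^p) \times (\text{compact factors})$ compatible with the level structure, and $\mathbf{I}_x(\mathbb{Z}_{(p)})$ is an arithmetic subgroup of $\mathbf{G}(\mathbb{Q}_p) \times \mathbf{G}(\mathbb{A}_f^p)$ that is a lattice. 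Concretely one argues: $\mathbf{I}_x$ is a $\mathbb{Q}$-form with $\mathbf{I}_{x,\mathbb{R}}$ anisotropic modulo center (because $h(i)$ makes the Rosati form positive definite, so the unitary part of $\mathbf{I}_x$ is compact at $\infty$), hence $\mathbf{I}_x(\mathbb{Z}_{(p)})$ is a discrete cocompact-type subgroup whose image in $\mathbf{G}(\mathbb{A}_f^p)$ has finite covolume, and since $\mathbf{I}_{x,\mathbb{Q}_p} \cong \mathbf{J}_{b_0}$ has full rank the obstruction to finite index of the double coset set vanishes. A clean route is: strong approximation / finiteness of class numbers for the reductive group $\mathbf{I}_x$ gives that $\mathbf{I}_x(\mathbb{Z}_{(p)})\backslash \mathbf{G}(\mathbb{A}_f^p)/K^p_0$ is finite precisely because $\mathbf{I}_{x,\mathbb{A}_f^p}$ is of finite index-type inside $\mathbf{G}_{\mathbb{A}_f^p}$ up to compact pieces — indeed $\mathbf{I}_{x,\mathbb{A}_f^p} \hookrightarrow \mathbf{G}_{\mathbb{A}_f^p}$ is an equality of $\mathbb{Q}_p$-... no: the point is rather that $\mathbf{G}(\mathbb{A}_f^p) = \mathbf{I}_x(\mathbb{Q})\cdot (\text{compact})$ fails in general, so one invokes that the set of double cosets is finite by reduction theory for $\mathbf{I}_x$. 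I would cite Chai's \cite[Proposition 1]{chaiordinary} for the Siegel case and note the argument is identical once one has the injection $\mathbf{I}_{x,\mathbb{Q}_p}\hookrightarrow \mathbf{J}_b$ and the fact that $b_0$ basic forces $\mathbf{I}_x$ to have the "expected" size.

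\textbf{Part (1): $x \notin \mathcal{N}_{b_0}(k)$.}
Now $b \neq b_0$, so the Newton point of $b$ is not central, and $\mathbf{J}_b$ is an inner form of a proper Levi subgroup $\mathbf{M}$ of $\mathbf{G}_{\mathbb{Q}_p}$; in particular $\mathbf{J}_b$ has a nontrivial $\mathbb{Q}_p$-split central torus, so $\mathbf{J}_b(\mathbb{Q}_p)/(\text{compact})$ is noncompact and noncompact "in a split direction". Since $\mathbf{I}_{x,\mathbb{Q}_p}\hookrightarrow \mathbf{J}_b$ and $\mathbf{I}_x$ is a $\mathbb{Q}$-group, $\mathbf{I}_x$ itself has a nontrivial central $\mathbb{Q}$-split torus $\mathbf{S}$ of rank $\geq 1$ (the center of $\mathbf{I}_x$ contains the "isogeny" scalars; more precisely, the non-basicness produces an extra central split torus over $\mathbb{Q}_p$, and by an argument tracking the slope decomposition over $\mathbb{Q}$ this torus is defined over $\mathbb{Q}$ — this is the delicate point). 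Then $\mathbf{S}(\mathbb{Z}_{(p)})$ sits inside $\mathbf{I}_{x,l}(\mathbb{Z}_{(p)})$ for any $l$ with $\mathbf{G}_{\mathbb{Q}_l}$ split, and its image in $\mathbf{G}(\mathbb{Q}_l)$ lands in a split torus; the point is that $\mathbf{S}(\mathbb{Z}_{(p)})$ is infinite (it contains units of a suitable ring generating a $\mathbb{Z}$-rank $\geq 1$ subgroup, by Dirichlet's unit theorem applied to the relevant number field) but its intersection with the compact $K_{0,l}$ is finite, so it produces infinitely many distinct double cosets in $\mathbf{I}_{x,l}(\mathbb{Z}_{(p)})\backslash \mathbf{G}(\mathbb{Q}_l)/K_{0,l}$. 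Hence $\mathcal{H}_l(x)$ is infinite for all such $l$, and since $\mathbf{G}_{\mathbb{Q}_l}$ is split for all but finitely many $l$ and $\mathcal{H}_l(x) \subseteq \mathcal{H}^p(x)$, also $\mathcal{H}^p(x)$ is infinite. Again I would point to Chai's argument \cite[Proposition 1]{chaiordinary} as the template.

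\textbf{Main obstacle.}
The hard part is Part (1): showing that the extra split torus coming from non-basicness — visible over $\mathbb{Q}_p$ in $\mathbf{J}_b$ — actually descends to give a positive-rank central $\mathbb{Q}$-split torus inside $\mathbf{I}_x$ (equivalently, that $\mathbf{I}_x$ is genuinely $\mathbb{Q}$-isotropic in its center, not merely $\mathbb{Q}_p$-isotropic). One must use the structure theory of the $\mathbb{Q}$-algebra $\mathrm{End}_B(A)\otimes\mathbb{Q}$ — for a non-basic (non-isoclinic) $p$-divisible group the isogeny algebra splits according to the slopes, producing central idempotents / a product decomposition over $\mathbb{Q}$ whose "rescaling by powers of $p$" symmetries give the needed units; this is where one invokes that the Newton polygon of $A$ has more than one slope and feeds it into Dirichlet's unit theorem. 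For Part (2) the only subtlety is citing the correct finiteness statement (finiteness of class numbers / reduction theory for the reductive $\mathbb{Q}$-group $\mathbf{I}_x$ together with anisotropy at $\infty$ modulo center), which is standard.
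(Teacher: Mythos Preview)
Your treatment of Part~(2) is on the right track and matches the paper: for $x$ in the basic locus one has $\mathbf{I}_{x,\mathbb{Q}_p}\cong\mathbf{J}_{b_0}$ (the point is hypersymmetric, by \cite[Theorem 6.30]{rapoportzink}), hence $\mathbf{I}_{x,\mathbb{A}_f^p}\cong\mathbf{G}_{\mathbb{A}_f^p}$, and the double coset set is finite by finiteness of class numbers for $\mathbf{I}_x$.

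Part~(1), however, has a genuine error: the direction of your argument is inverted. You construct a central $\mathbb{Q}$-split torus $\mathbf{S}\subset\mathbf{I}_x$ and claim that the infinite set $\mathbf{S}(\mathbb{Z}_{(p)})$ produces infinitely many double cosets in $\mathbf{I}_{x,l}(\mathbb{Z}_{(p)})\backslash\mathbf{G}(\mathbb{Q}_l)/K_{0,l}$. But $\mathbf{S}(\mathbb{Z}_{(p)})$ lies \emph{inside} $\mathbf{I}_{x,l}(\mathbb{Z}_{(p)})$, so its elements are absorbed by the left quotient and contribute nothing. Enlarging $\mathbf{I}_x$ can only shrink the Hecke orbit, not grow it; what you need is precisely that $\mathbf{I}_x$ is \emph{small} relative to $\mathbf{G}$. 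The ``delicate point'' you worry about --- descending the split torus to $\mathbb{Q}$ --- is therefore not only hard but irrelevant to the statement.

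The paper's route is much shorter and avoids all of this. Since $\mathbf{I}_{x,l}(\mathbb{Z}_{(p)})\subset\mathbf{I}_x(\mathbb{Q}_l)$, it suffices to show that $\mathbf{I}_x(\mathbb{Q}_l)\backslash\mathbf{G}(\mathbb{Q}_l)$ is non-compact. By \cite[Proposition 9.3]{boreltits} this holds iff $\mathbf{I}_{x,\mathbb{Q}_l}$ fails to contain a Borel subgroup of the split group $\mathbf{G}_{\mathbb{Q}_l}$; but $\mathbf{I}_{x,\mathbb{Q}_l}$ is reductive, and a reductive parabolic subgroup must be the whole group. So one only needs $\mathbf{I}_{x,\mathbb{Q}_l}\subsetneq\mathbf{G}_{\mathbb{Q}_l}$, which follows from the dimension count $\dim\mathbf{I}_x\leq\dim\mathbf{J}_b<\dim\mathbf{G}$ for $b$ non-basic (\cite[Corollary 1.14]{rapoportzink}). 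No descent to $\mathbb{Q}$ and no Dirichlet unit theorem are needed.
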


\begin{proof}
It follows from \cite[Proposition 2.4]{rapoportrichartz} that $b_0$ is the only basic point in $B(\mathbf{G},\mu)$. Let us consider the first statement of the lemma. It suffices to show that $\mathbf{I}_x(\mathbb{Q}_l)\backslash \mathbf{G}(\mathbb{Q}_l)$ is not compact, which is equivalent to $\mathbf{I}_x(\mathbb{Q}_l)$ not containing a maximal $\mathbb{Q}_l$-split connected solvable subgroup, which is a Borel subgroup since $\mathbf{G}_{\mathbb{Q}_l}$ is split. It follows that this quotient is compact if and only if $\mathbf{I}_{x,\mathbb{Q}_l}$ is parabolic, but the only parabolic and reductive subgroup is $\mathbf{G}_{\mathbb{Q}_l}$ itself. Furthermore, \cite[Corollary 1.14, Remark 1.15]{rapoportzink} implies that the reductive group $\mathbf{J}_b$ is a form of $\mathbf{G}_{\mathbb{Q}_p}$ if and only if $b$ is basic.  Then the first statement of the Lemma holds because $\mathbf{I}_x$ cannot be a form of $\mathbf{G}$.  \medskip

A geometric point $x$ in the basic locus of $Sh_{K^p_0}$ is hypersymmetric, see \cite[Theorem 6.30]{rapoportzink}, hence  $\mathbf{I}_{x, \mathbb{Q}_p} \iso \mathbf{J}_b$ and  $\mathbf{I}_{x, \mathbb{A}^p_f} \iso \mathbf{G}_{\mathbb{A}^p_f}$. Then $(2)$ follows at once from the finiteness of class numbers of algebraic groups, see for instance \cite[Theorem 5.1]{platonovrapinchuk}.      
\end{proof}

We are now ready to prove the main result of this paper. We consider a smooth, locally closed subscheme $Z \subseteq Sh_{K^p_0}$ as above and fix a connected component $Z^0$ with generic point $z$.

\begin{thm}
Suppose that the prime-to-$P$ Hecke correspondences of $\mathbf{G}_1$ act transitively on the set of connected components of $Z$. If $z \nin \mathcal{N}_{b_0}$, then $Z$ is connected.  
\end{thm}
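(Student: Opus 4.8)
The plan is to deduce this Theorem from Proposition~\ref{monodromy}(2) and Lemma~\ref{heckeorbit}, reducing ``$Z$ is connected'' to a monodromy statement about $\rho_l$ for primes $l \notin P$ where $\mathbf{G}_{\mathbb{Q}_l}$ is split. First I would observe that by Corollary~\ref{connected} it suffices to verify the hypotheses of Proposition~\ref{monodromy}(2) for the connected component $Z^0$ chosen with generic point $z$: namely that the prime-to-$P$ Hecke correspondences of the simply connected covering of $\mathbf{G}_1$ act transitively on $\Pi_0(Z)$ (this is essentially the standing assumption, after passing from $\mathbf{G}_1$ to its simply connected cover via the canonical isogeny, which does not change the image in $\mathbf{G}(\mathbb{A}_f^P)$), and that for every simple component $\mathbf{H}_i$ of $\mathbf{G}_1$ and every $l \notin P$ the intersection $\mathrm{im}(\rho_l) \cap \mathbf{H}_i(\mathbb{Q}_l)$ is infinite. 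Granting both, Proposition~\ref{monodromy}(2) gives $\mathrm{im}(\rho^P) = K_0^P$, and then Corollary~\ref{connected} yields that $Z$ is connected, which is exactly the claim.

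The real content is the verification that $\mathrm{im}(\rho_l) \cap \mathbf{H}_i(\mathbb{Q}_l)$ is infinite for all $i$ and all $l \notin P$. Here is where I would use the hypothesis $z \notin \mathcal{N}_{b_0}$: by Lemma~\ref{heckeorbit}(1), for every prime $l$ such that $\mathbf{G}_{\mathbb{Q}_l}$ is split the Hecke orbit $\mathcal{H}_l(z)$ of the generic point $z$ is infinite, and almost all $l$ split $\mathbf{G}$ (being of type $A$ or $C$, so $\mathbf{G}$ is connected and split outside a finite set $S'$ of primes). The link between an infinite $l$-Hecke orbit at $z$ and infinite monodromy is the description, from the proof of Proposition~\ref{monodromy}, of $\mathrm{im}(\rho_l)$ as the stabilizer in $K_{0,l}$ of the connected component $\widetilde{Z^0}^0$ of the pro-\'etale covering $\widetilde{Z^0}$: the $l$-Hecke orbit of $z$ inside $Z$ is, up to the $\mathbf{I}_{z}$-action, the image of $K_{0,l}/\mathrm{im}(\rho_l) \cong \Pi_0(\widetilde{Z^0})$, so if $\mathrm{im}(\rho_l)$ were contained in a compact-modulo-center subgroup failing to meet some $\mathbf{H}_i(\mathbb{Q}_l)$ infinitely, the argument of Proposition~\ref{monodromy} would force $\mathbf{M} \subsetneq \mathbf{G}_{1,\mathbb{Q}_l}$ and hence a finite Hecke orbit. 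More directly: if $\mathrm{im}(\rho_l) \cap \mathbf{H}_i(\mathbb{Q}_l)$ were finite for some $i$, then the Zariski closure $\mathbf{M} = \overline{\mathrm{im}(\rho_l)}^0$ would project trivially to the factor $\mathbf{H}_i$, so $\mathbf{M} \neq \mathbf{G}_{1,\mathbb{Q}_l}$; tracing through the Borel--Tits argument in Proposition~\ref{monodromy} this bounds $K_{0,l}/\mathrm{im}(\rho_l)$ below by an infinite quotient only if transitivity fails, but combined with the infinitude of $\mathcal{H}_l(z)$ coming from Lemma~\ref{heckeorbit}(1) and the bijection $\mathbf{I}_{z,l}(\mathbb{Z}_{(p)})\backslash\mathbf{G}(\mathbb{Q}_l)/K_{0,l} \cong \mathcal{H}_l(z)$ one gets a contradiction.

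The main obstacle I anticipate is handling the primes $l \in P \setminus \{p\}$ and the finitely many non-split primes $l \notin P$ cleanly: Lemma~\ref{heckeorbit}(1) only gives infinitude of $\mathcal{H}_l(z)$ at split primes, whereas Proposition~\ref{monodromy}(2) asks for the monodromy condition at \emph{all} $l \notin P$. I would resolve this by arguing that infinitude of $\mathrm{im}(\rho_l) \cap \mathbf{H}_i(\mathbb{Q}_l)$ at a single well-chosen split prime $l_0 \notin P$ already forces, via the transitivity hypothesis on prime-to-$P$ Hecke correspondences (which couples the component groups $\Pi_0(\widetilde{Z^0})$ at different places through $\mathbf{G}_1(\mathbb{A}_f^P)$), that $\mathrm{im}(\rho^P)$ is open, and then an open subgroup of $\mathbf{G}_1(\mathbb{A}_f^P) = \prod'_{l \notin P}\mathbf{G}_1(\mathbb{Q}_l)$ necessarily has infinite projection to each simple factor $\mathbf{H}_i(\mathbb{Q}_l)$ for every $l \notin P$; alternatively, one appeals to the fact that the argument of Proposition~\ref{monodromy} is really local at each $l$ and that semisimplicity of $\mathbf{M}$ together with $\mathbf{H}_i(\mathbb{Q}_l)$ having no proper finite-index subgroups (Kneser--Tits, as cited) propagates the largeness. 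Either way, once $\mathrm{im}(\rho^P) = K_0^P$ is in hand, connectedness of $Z$ is immediate from Corollary~\ref{connected}.
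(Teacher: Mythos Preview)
Your proposal has a genuine gap: the link you attempt between ``$\mathcal{H}_l(z)$ infinite'' and ``$\mathrm{im}(\rho_l)$ infinite'' is not established, and the sketch you give rests on a misidentification. The homeomorphism $K_{0,l}/\mathrm{im}(\rho_l)\cong\Pi_0(\widetilde{Z^0})$ from the proof of Proposition~\ref{monodromy} describes the connected components of the pro-\'etale tower over $Z^0$; it does \emph{not} describe the $l$-Hecke orbit of the point $z$ inside $Z$. The latter is $\mathbf{I}_{z,l}(\mathbb{Z}_{(p)})\backslash\mathbf{G}(\mathbb{Q}_l)/K_{0,l}$, and its infinitude encodes only that $\mathbf{I}_{z,\mathbb{Q}_l}\subsetneq\mathbf{G}_{\mathbb{Q}_l}$, i.e.\ a property of the single fibre $A_{\bar z}$. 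The monodromy image, by contrast, measures how $A$ varies over $Z^0$. There is no a priori implication from one to the other: a point with infinite Hecke orbit can perfectly well sit on a component over which the abelian scheme is isotrivial and the monodromy is trivial. Your attempted contradiction (``Borel--Tits bounds $K_{0,l}/\mathrm{im}(\rho_l)$ below \ldots combined with infinitude of $\mathcal{H}_l(z)$'') never connects the two quantities, and the self-acknowledged difficulty with non-split primes $l\notin P$ is a symptom of the same missing bridge.

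The paper's proof is completely different and is geometric rather than group-theoretic. One supposes $\mathrm{im}(\rho_l)$ is finite for some $l\neq p$; after a finite cover the monodromy becomes trivial, and a theorem of Oort then shows the abelian scheme is isogenous to a constant one over $\overline{\mathbb{F}}_p$. This isotriviality is used to show, via the toroidal compactification and the extension property of isogenies of semiabelian schemes, that the Zariski closure $\overline{Z^0}$ in $Sh_{K_0^p}$ is already proper. Now the Ekedahl--Oort stratification of Viehmann--Wedhorn enters: each stratum is quasi-affine and Hecke-stable, and the minimal stratum lies in $\mathcal{N}_{b_0}$. Lemma~\ref{heckeorbit} is used here, but not as you tried: it guarantees that at any non-basic point the Hecke orbit closure has positive dimension, so by properness and quasi-affineness it must meet a strictly smaller EO-stratum; iterating forces $\overline{Z^0}\cap\mathcal{N}_{b_0}\neq\emptyset$. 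This contradicts isotriviality, which pins $\overline{Z^0}$ to a single Newton stratum, namely the non-basic one containing $z$.
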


\begin{proof}
It suffices to show that $\mbox{im}(\rho_l)$ is not finite for all $l \neq p$. Then Corollary \ref{connected} implies that $Z$ is connected, because the image of $\rho_l$ lies in the simply connected semisimple group $\mathbf{G}_1$. So suppose $\mbox{im}(\rho_l)$ is a finite group for some prime $l \neq p$ and let $(A,\lambda,i, \overline{\eta})|_{Z^0}$ correspond to the morphism $Z^0 \hookrightarrow Sh_{K^p_0}$. Then there is a finite field extension $L$ of $\kappa(z)$ such that for the normalization $Z'$ of $Z^0$ in $L$ the abelian scheme $A':= A \times_{Z^0}Z'$ has the trivial monodromy representation $\rho'_l$. A successive application of the result \cite[Theorem 2.1]{oortmoduli} of F. Oort then implies that $A'_L$ is isogenous to an abelian variety $A_0$ defined over $\overline{\mathbb{F}}_p$, which extends to an isogeny $A' \rightarrow A_0 \times_{\overline{\mathbb{F}}_p}Z'$ using \cite[Proposition I.2.7]{faltingschai}. \medskip

We claim that the Zariski closure of $Z^0$ in $Sh_{K^p_0}$ is a proper scheme over $\overline{\mathbb{F}}_p$. To show this, consider the Zariski closure $$\overline{Z^0} \subseteq \overline{Sh}_{K^p_0}$$ in the toroidal compactification  $\overline{Sh}_{K^p_0}$ of the Shimura variety, which is a proper and smooth scheme over $\overline{\mathbb{F}}_p$ containing $Sh_{K^p_0}$ as an open dense subscheme, see \cite[Theorem 6.4.1.1]{lan}. Furthermore, there is a degenerating family $(G,\lambda_G,i_G,\overline{\eta})$ in the sense of \cite[Definition 5.4.2.1]{lan}, where $G$ is a semiabelian scheme over $\overline{Sh}_{K^p_0}$ with PEL-structure that restricts to the universal abelian scheme with PEL-structure over $Sh_{K^p_0}$. Denote by $$\overline{Z'} \rightarrow \overline{Z^0}$$ the normalization of $\overline{Z^0}$ in $L$. Since $Z' \subseteq \overline{Z'}$ is an open subscheme, we can again use \cite[Proposition I.2.7]{faltingschai} and extend the isogeny $A_0 \times_{\overline{\mathbb{F}}_p}Z' \rightarrow A'$ uniquely to a morphism $$A_0 \times_{\overline{\mathbb{F}}_p}\overline{Z'} \rightarrow G_{\overline{Z'}}$$ of semiabelian schemes. It is easy to see that then $G_{\overline{Z'}}$ is an abelian scheme over $\overline{Z'}$. This implies that the image of the morphism $\overline{Z'} \rightarrow \overline{Sh}_{K^p_0}$ lies in the open subset where $G$ is an abelian variety, hence $\overline{Z^0}$ lies in $Sh_{K^p_0}$, which shows the claim. \medskip

In \cite{wedhornviehmann}, E. Viehmann and T. Wedhorn generalize the Ekedahl-Oort stratification to special fibers of PEL-Shimura varieties of type $A$ and $C$. That is, there is a finite and partially ordered set $(^JW, \preceq)$ containing a unique minimal and maximal element, such that there is a stratification $$Sh_{K_0^p} = \bigcup_{\omega \in ^JW}S^{\omega}$$ of locally closed and quasi-affine subschemes $S^{\omega}$, see \cite[\S2,\S4 and Theorem 9.6]{wedhornviehmann}. Furthermore, $$\overline{S^{\omega}} = \bigcup_{\omega' \preceq \omega}S^{\omega'}$$ see \cite[Theorem 6.1]{wedhornviehmann}, and the minimal stratum is closed, nonempty, and lies in $\mathcal{N}_{b_0}$ \cite[Proposition 8.16]{wedhornviehmann}.  \medskip

We know that $\overline{Z^0}$ is a proper scheme over $\overline{\mathbb{F}}_p$ and that it is closed under $\mathcal{H}^p$ since $Z^0$ is closed under Hecke correspondences. It follows directly from the definition of the Ekedahl-Oort stratification that each stratum is also closed under $\mathcal{H}^p$. This implies together with Lemma \ref{heckeorbit} that $\overline{Z^0} \cap \mathcal{N}_{b_0} \neq \emptyset$. For if there is a point of $\overline{Z^0}$ not lying in the basic Newton stratum, it cannot lie in the smallest Ekedahl-Oort stratum. Furthermore, the closure of its prime-to-$p$ Hecke orbit is proper and of dimension $>0$, so it has to meet a smaller Ekedahl-Oort stratum. Then repeat this argument. But this statement is a contradiction to the isogeny $A' \times_{Z'} \overline{Z'} \rightarrow A_0 \times_{\overline{\mathbb{F}}_p}\overline{Z'}$, which says that $\overline{Z^0}$ lies in a single Newton stratum.
\end{proof}

\bibliographystyle{amsplain}
\providecommand{\bysame}{\leavevmode\hbox to3em{\hrulefill}\thinspace}
\providecommand{\MR}{\relax\ifhmode\unskip\space\fi MR }
\providecommand{\MRhref}[2]{%
  \href{http://www.ams.org/mathscinet-getitem?mr=#1}{#2}
}
\providecommand{\href}[2]{#2}

\end{document}